\numberwithin{equation}{section}
\def\RR{{\mathbb R}}
\newcounter{marnote}
\begin{document}
\newtheorem{thm}{Theorem}[section]
\newtheorem{Def}[thm]{Definition}
\newtheorem{lem}[thm]{Lemma}
\newtheorem{rem}[thm]{Remark}
\newtheorem{question}[thm]{Question}
\newtheorem{prop}[thm]{Proposition}
\newtheorem{cor}[thm]{Corollary}
\newtheorem{example}[thm]{Example}

\title{Mean oscillation gradient estimates for elliptic systems in divergence form with VMO coefficients}

\author{Luc Nguyen \footnote{Mathematical Institute and St Edmund Hall, University of Oxford, Andrew Wiles Building, Radcliffe Observatory Quarter, Woodstock Road, Oxford OX2 6GG, UK. Email: luc.nguyen@maths.ox.ac.uk.}}

\date{}

\maketitle

\centerline{\it Dedicated to Professor Duong Minh Duc on the occasion of his 70th birthday}

\begin{abstract}
We consider gradient estimates for $H^1$ solutions of linear elliptic systems in divergence form $\partial_\alpha(A_{ij}^{\alpha\beta} \partial_\beta u^j) = 0$. It is known that the Dini continuity of coefficient matrix $A = (A_{ij}^{\alpha\beta}) $ is essential for the differentiability of solutions. We prove the following results:
\begin{enumerate}[(a)]
\item If $A$ satisfies a condition slightly weaker than Dini continuity but stronger than belonging to VMO, namely that the $L^2$ mean oscillation $\omega_{A,2}$ of $A$ satisfies
\[
X_{A,2} := \limsup_{r\rightarrow 0} r \int_r^2 \frac{\omega_{A,2}(t)}{t^2} \exp\Big(C_* \int_{t}^R \frac{\omega_{A,2}(s)}{s}\,ds\Big)\,dt  < \infty,
\]
where $C_*$ is a positive constant depending only on the dimensions and the ellipticity, then $\nabla u \in BMO$.
\item If $X_{A,2} = 0$, then $\nabla u \in VMO$.
\item If $A \in VMO$ and if $\nabla u \in L^\infty$, then $\nabla u \in VMO$. 
\item Finally, examples satisfying $X_{A,2} = 0$ are given showing that it is not possible to prove the boundedness of $\nabla u$ in statement (b), nor the continuity of $\nabla u$ in statement (c).
\end{enumerate}
\end{abstract}

\section{Introduction}

Let $n \geq 2$, $N \geq 1$ and consider the elliptic system for $u = (u^1, \ldots, u^N)$
\begin{equation}
\partial_\alpha(A_{ij}^{\alpha\beta} \partial_\beta u^j) = 0 \quad \text{ in } B_4, \quad i = 1, \ldots, N,
	\label{Eq:u}
\end{equation}
where $B_4$ is the ball in $\RR^n$ of radius four and centered at the origin, and the coefficient matrix $A = (A_{ij}^{\alpha\beta})$ is assumed to be bounded and measurable in $\bar B_4$ and to satisfy, for some positive constants $\lambda$ and $\Lambda$,
\begin{align}
|A(x)|
	&\leq \Lambda \quad\text{ for a.e. } \quad x \in B_4,\label{Eq:CoefB}\\
\int_{B_2} A_{ij}^{\alpha\beta} \partial_\beta \varphi^j \partial_\alpha \varphi^i\,dx
 	& \geq \lambda \|\nabla \varphi\|_{L^2(B_4)}^2 \text{ for all } \varphi \in H_0^1(B_4).
 	\label{Eq:CoefEll}
\end{align}

It is well known that if the coefficient matrix $A$ belongs to $C^{0,\alpha}_{\rm loc}(B_4)$ then every solution $u \in H^1(B_4)$ of \eqref{Eq:u} belongs to $C^{1,\alpha}_{\rm loc}(B_2)$; see e.g. Giaquinta \cite[Theorem 3.2]{GiaquintaBook} where the result is attributed to Campanato \cite{Campanato65} and Morrey \cite{Morrey54}. It was conjectured by Serrin \cite{Serrin64} that the assumption $u \in H^1(B_4)$ can be relaxed to $u \in W^{1,1}(B_4)$. This has been settled in the affirmmative by Brezis \cite{BrezisinAncona09, Brezis08}. (See Hager and Ross \cite{HagerRoss72} for the relaxation from $u \in H^1(B_4)$ to $u \in W^{1,p}(B_4)$ for some $1 < p < 2$.) Moreover, in \cite{BrezisinAncona09, Brezis08}, it was shown that if $A$ satisfies the Dini condition
\begin{equation}
\int_0^2 \frac{\bar\omega_{A}(t)}{t}\,dt < \infty \quad \text{ where } \bar\omega_{A}(r) := \sup_{x, y \in B_2,|x - y| < r} |A(x) - A(y)|,
	\label{Eq:Dini}
\end{equation}
then every solution $u \in W^{1,1}(B_4)$ of \eqref{Eq:u} belongs to $C^1(B_2)$. For related works on the differentiability of weak solutions under suitable conditions on $\bar\omega_A$, see also \cite{HartmanWintner55, Lieberman87, MazyaMcOwen11-JDE}. 

Differentiability of weak solutions under weaker Dini conditions involving integral mean oscillation of $A$ has also been studied. For $0 < r \leq 2$, let
\begin{align*}
\bar\varphi_A(r) 
	&:= \sup_{x \in B_2}  \Big\{\frac{1}{|B_r(x)|}\int_{B_r(x)} |A(y) - A(x)|^2\,dy\Big\}^{1/2},\\
\omega_{A}(r) 
	&:= \sup_{x \in B_2} \frac{1}{|B_r(x)|}\int_{B_r(x)} |A(y) - (A)_{B_r(x)}|\,dy, \\
(A)_{B_r(x)} 
	&:= \frac{1}{|B_r(x)|}\int_{B_r(x)} A(y)\,dy, \quad 0 < r \leq 2.
\end{align*}
In Li \cite{Li17-CAM} it was shown that if 
\begin{equation}
\int_0^2 \frac{\bar\varphi_{A}(t)}{t}\,dt 
	< \infty,
	\label{Eq:DiniL2}
\end{equation}
then every solution $u \in H^1(B_4)$ of \eqref{Eq:u} belongs to $C^1(B_2)$. In Dong and Kim \cite{DongKim17-CPDE} (see also \cite{DongEscauriazaKim18-MAnn}), this conclusion was shown to remain valid under the weaker condition that 
\begin{align}
\int_0^2 \frac{\omega_{A}(t)}{t}\,dt 
	< \infty.
\label{Eq:DiniL1} 
\end{align}
(Note that the finiteness of $\int_0^2 \frac{\omega_{A}(t)}{t}\,dt$ or $\int_0^2 \frac{\bar\varphi_{A}(t)}{t}\,dt$ implies that $A$ is continuous.)

The Dini condition \eqref{Eq:Dini} and its integral variants \eqref{Eq:DiniL2}, \eqref{Eq:DiniL1} are phenomenologically sharp for the differentiablity of weak solutions of \eqref{Eq:u}. In Jin, Maz'ya and van Schaftingen \cite{JinMvSchaftingen09}, examples of continuous coefficient matrices $A$ with moduli of continuity $\bar\omega_A(t) \sim \frac{1}{|\ln t|}$ as $t \rightarrow 0$ were given showing the following phenomena: 
\begin{itemize}
\item there exists a solution $u \in W^{1,1}(B_4)$ of \eqref{Eq:u} such that  $u \in W^{1,p}(B_4)$ for all $p \in [1,\infty)$, and $\nabla u \in BMO_{\rm loc}(B_4)$ but $\nabla u \notin L^\infty_{\rm loc}(B_2)$ and $\nabla u \notin VMO_{\rm loc}(B_2)$.\footnote{The statement that $\nabla u \notin VMO_{\rm loc}(B_2)$ is not explicitly stated in \cite{JinMvSchaftingen09}, but can be seen from the proof of Proposition 1.5 therein.}
\item there exists a solution $u \in W^{1,1}(B_4)$ of \eqref{Eq:u} such that  $u \in W^{1,p}(B_4)$  for all $p \in [1,\infty)$ but $\nabla u \notin BMO_{\rm loc}(B_2)$.
\end{itemize}

In this paper, we consider mean oscillation estimates for $\nabla u$ when $A$ slightly fails the Dini conditions \eqref{Eq:Dini}, \eqref{Eq:DiniL2} and \eqref{Eq:DiniL1}. For $1 \leq p < \infty$, let $\omega_{A,p}: (0, 2] \rightarrow [0,\infty)$ denote the $L^p$ mean oscillation of $A$:
\begin{align*}
\omega_{A,p}(r) &= \sup_{x \in B_2} \Big\{\frac{1}{|B_r(x)|}\int_{B_r(x)} |A(y) - (A)_{B_r(x)}|^p\,dy\Big\}^{1/p}.
\end{align*}
It is clear that $\omega_{A,1} = \omega_{A}$, $\omega_{A,2} \leq \bar\varphi_A$, $\omega_{A,p}$ is non-decreasing in $p$, and $\omega_{A,p} \leq \bar\omega_A$ for all $p \in [1,\infty)$.

We now state our first result.

\begin{thm}
\label{Thm:1}
Let $A = (A_{ij}^{\alpha\beta})$ satisfy \eqref{Eq:CoefB} and \eqref{Eq:CoefEll}. There exists a constant $C_* > 0$, depending only on $n$, $N$, $\Lambda$ and $\lambda$ such that if
\begin{equation}
X_{A,2} := \limsup_{r\rightarrow 0} r \int_r^2 \frac{\omega_{A,2}(t)}{t^2} \exp\Big(C_* \int_{t}^2 \frac{\omega_{A,2}(s)}{s}\,ds\Big)\,dt < \infty,
	\label{Eq:DiniX}
\end{equation}
then every solution $u \in H^1(B_4)$ of \eqref{Eq:u} satisfies $\nabla u \in BMO_{\rm loc}(B_2)$. Moreover, if 
\begin{equation}
X_{A,2} = 0,
	\label{Eq:DiniY}
\end{equation}
then every solution $u \in H^1(B_4)$ of \eqref{Eq:u} satisfies $\nabla u \in VMO_{\rm loc}(B_2)$. 
\end{thm}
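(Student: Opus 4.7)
The plan is a Campanato-type freezing iteration closed by a discrete Gronwall argument in which the exponential weight appearing in $X_{A,2}$ arises naturally. Fix $x_0 \in B_1$ and $0 < r \leq 1$, and let $v \in H^1(B_r(x_0))$ be the solution of the frozen constant-coefficient system obtained from \eqref{Eq:u} by replacing $A$ with its average $(A)_{B_r(x_0)}$, subject to $v = u$ on $\partial B_r(x_0)$; set $w = u - v$. Since the frozen coefficient tensor is constant and inherits the ellipticity bounds of $A$, standard interior estimates for constant-coefficient elliptic systems yield the Campanato-type decay
\[
\frac{1}{|B_{\theta r}(x_0)|}\int_{B_{\theta r}(x_0)}\!|\nabla v - (\nabla v)_{B_{\theta r}(x_0)}|^2\,dy \ \leq\ C_0\,\theta^2\,\frac{1}{|B_r(x_0)|}\int_{B_r(x_0)}\!|\nabla v - (\nabla v)_{B_r(x_0)}|^2\,dy
\]
for every $\theta \in (0,1/2]$, with $C_0 = C_0(n,N,\lambda,\Lambda)$.

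The function $w$ vanishes on $\partial B_r(x_0)$ and satisfies the frozen system with right-hand side of divergence form involving $(A - (A)_{B_r(x_0)})\nabla u$. Testing against $w$ and using ellipticity and the Cauchy--Schwarz inequality gives
\[
\int_{B_r(x_0)}|\nabla w|^2\,dy \ \leq\ C\int_{B_r(x_0)}|A - (A)_{B_r(x_0)}|^2\,|\nabla u|^2\,dy.
\]
To extract a \emph{linear} factor of $\omega_{A,2}(r)$, I apply H\"older's inequality with exponents calibrated to the Meyers--Gehring self-improvement $\nabla u \in L^{2+\epsilon_0}_{\rm loc}(B_4)$, then invoke the John--Nirenberg inequality applied to the bounded function $A$ in order to dominate the resulting higher $L^p$ mean oscillation of $A$ by $\omega_{A,2}$. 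This yields
\[
\frac{1}{|B_r(x_0)|}\int_{B_r(x_0)}|\nabla w|^2\,dy \ \leq\ C_1\,\omega_{A,2}(r)^2\,\frac{1}{|B_{2r}(x_0)|}\int_{B_{2r}(x_0)}|\nabla u|^2\,dy.
\]
Setting
\[
\Phi(x_0,r) := \Big(\tfrac{1}{|B_r(x_0)|}\!\int_{B_r(x_0)}\!|\nabla u - (\nabla u)_{B_r(x_0)}|^2\Big)^{1/2},\quad \Xi(x_0,r) := \Big(\tfrac{1}{|B_{2r}(x_0)|}\!\int_{B_{2r}(x_0)}\!|\nabla u|^2\Big)^{1/2},
\]
combining the two displayed bounds via $\nabla u = \nabla v + \nabla w$ and the triangle inequality produces the one-step decay
\[
\Phi(x_0,\theta r)\ \leq\ \sqrt{C_0}\,\theta\,\Phi(x_0,r) + C_2\,\theta^{-n/2}\,\omega_{A,2}(r)\,\Xi(x_0,r).
\]

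Fix $\theta \in (0,1/2]$ so small that $\sqrt{C_0}\,\theta \leq 1/2$, set $r_k = \theta^k R$ for a base scale $R \in (0,1]$, and abbreviate $\Phi_k = \Phi(x_0, r_k)$, $\Xi_k = \Xi(x_0, r_k)$. The previous display together with the elementary bound $\Xi_k \leq \Xi_{k-1} + C_3 \Phi_{k-1}$ (coming from $|(\nabla u)_{B_{2r_k}} - (\nabla u)_{B_{2r_{k-1}}}| \lesssim \Phi(x_0, 2r_{k-1})$) is a coupled linear recurrence in $(\Phi_k, \Xi_k)$. Applying a discrete Gronwall argument to $Y_k := \mu\Phi_k + \Xi_k$, with $\mu$ chosen large enough to absorb $C_3$, gives $Y_k \leq Y_0\prod_{j<k}(1 + C_4\,\omega_{A,2}(r_j))$; reinserting into the recurrence yields
\[
\Phi_k \ \leq\ C_5\,(\Phi_0 + \Xi_0)\,\Big[\theta^k + \sum_{j=0}^{k-1}\theta^{k-j}\,\omega_{A,2}(r_j)\exp\!\Big(C_*\sum_{i=j}^{k-1}\omega_{A,2}(r_i)\Big)\Big].
\]
A routine Riemann-sum comparison (using a non-decreasing dyadic majorant of $\omega_{A,2}$) converts the bracket into
\[
C_6\Big[\tfrac{r_k}{R}\exp\!\Big(C_*\!\int_{r_k}^R\!\tfrac{\omega_{A,2}(s)}{s}\,ds\Big) + r_k\!\int_{r_k}^R\!\frac{\omega_{A,2}(t)}{t^2}\exp\!\Big(C_*\!\int_t^R\!\tfrac{\omega_{A,2}(s)}{s}\,ds\Big)\,dt\Big].
\]
Under $X_{A,2} < \infty$ this is uniformly bounded as $r_k \to 0$ and uniformly in $x_0 \in B_1$, so $\sup_{x_0, r}\Phi(x_0, r) < \infty$, which by the Campanato characterisation of BMO gives $\nabla u \in BMO_{\rm loc}(B_2)$. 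When $X_{A,2} = 0$ the bracket tends to $0$ uniformly in $x_0$, which is precisely the condition for $\nabla u \in VMO_{\rm loc}(B_2)$.

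The main obstacle is the comparison estimate with exactly a linear power of $\omega_{A,2}(r)$: a naive H\"older argument yields only $\omega_{A,2}(r)^{2\epsilon_0/(2+\epsilon_0)}$ or the higher mean oscillation $\omega_{A,p}(r)$ with $p > 2$, neither of which matches the weight inside $X_{A,2}$. The combination of Meyers' self-improvement for $\nabla u$ with the John--Nirenberg inequality applied to the bounded coefficient $A$ is what preserves the linear $\omega_{A,2}$-dependence, at the price of a constant depending on $\Lambda$. A secondary subtlety lies in the Gronwall step: $\Phi_k$ and $\Xi_k$ are genuinely coupled (indeed $\Xi_k$ may grow, since $\nabla u$ need not be $L^\infty$ under the hypothesis of the theorem), so one must track both variables simultaneously, letting $\Xi_k$ drive the exponential factor while showing that $\Phi_k$ itself stays bounded precisely under the condition $X_{A,2} < \infty$.
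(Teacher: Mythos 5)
Your overall architecture --- freezing the coefficients at geometrically decreasing scales, a coupled recurrence for the mean oscillation and the averaged energy, a discrete Gronwall product producing the exponential weight, and a Riemann-sum comparison --- is essentially the same as the paper's (which proves Theorem \ref{Thm:1} via Proposition \ref{Prop:MO}), and the iteration and resummation parts of your argument are sound. The gap is precisely at the step you yourself flag as ``the main obstacle'': the comparison estimate with a \emph{linear} factor of $\omega_{A,2}(r)$. Testing the equation for $w=u-v$ against $w$ leaves you with $\int_{B_r}|A-(A)_{B_r}|^2|\nabla u|^2$, and the proposed combination of Meyers' higher integrability with John--Nirenberg does not reduce this to $\omega_{A,2}(r)^2$ times the averaged energy. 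H\"older with the Meyers exponent $2q=2+\epsilon_0$ produces $\omega_{A,2q'}(r)^2$ with $2q'>2$, and John--Nirenberg controls $\omega_{A,p}(r)$ for $p>2$ only in terms of the BMO seminorm of $A$ \emph{localised to} $B_r(x_0)$, i.e.\ in terms of $\sup_{\rho\le r}\omega_{A,1}(\rho)$: all scales below $r$ enter, not just the single scale $r$. Indeed no inequality of the form $\omega_{A,p}(r)\le C\,\omega_{A,2}(r)$ with $p>2$ holds for bounded functions: if $A$ equals $\Lambda$ on a subset of $B_r(x_0)$ of relative measure $\delta$ and $0$ elsewhere, then $\omega_{A,p}(r)\sim \delta^{1/p}\Lambda$, so $\omega_{A,2q'}(r)/\omega_{A,2}(r)\sim\delta^{1/(2q')-1/2}\to\infty$ as $\delta\to0$. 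Falling back on $|A|\le\Lambda$ alone gives only the sublinear power $\omega_{A,2}(r)^{1/q'}$, which changes the weight inside $X_{A,2}$ and does not yield the theorem as stated; replacing $\omega_{A,2}$ by its running supremum $\sup_{\rho\le r}\omega_{A,2}(\rho)$ would let the John--Nirenberg route go through but proves a strictly weaker theorem, since $X_{A,2}$ computed with that majorant is a stronger hypothesis.

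The paper secures the linear dependence by a different device (Lemma \ref{Lem:HRep}): it writes the system for $u-h$ with source $(\bar A-A)_{ij}^{\alpha\beta}\partial_\beta h^j$ rather than $(\bar A-A)_{ij}^{\alpha\beta}\partial_\beta u^j$, where $h$ is the frozen-coefficient replacement. Since $\nabla h$ is bounded in $L^\infty$ on a slightly smaller ball by interior estimates for constant-coefficient systems, the source is controlled by $\|A-\bar A\|_{L^2}\,\|\nabla h\|_{L^\infty}$, which is genuinely linear in $\omega_{A,2}$ with no reverse-H\"older input and no smallness of $[A]_{BMO}$. The price is that the resulting bound on $\nabla(u-h)$ is an interior estimate carrying a lower-order term $\|u-h\|_{L^1(B_{2R})}$, which the paper controls by a duality argument (testing against an auxiliary adjoint problem with bounded right-hand side). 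If you substitute this comparison lemma for your energy-plus-H\"older step, the rest of your iteration goes through; one further small caution is the step $\Xi_k\le\Xi_{k-1}+C_3\Phi_{k-1}$, where you must apply the triangle inequality at the level of $L^2$ norms before normalising so that the coefficient of $\Xi_{k-1}$ is exactly $1$, as otherwise the Gronwall product acquires a spurious geometric growth.
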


Note that condition \eqref{Eq:DiniX} implies that $\omega_{A,2}(t) \rightarrow 0$ as $t \rightarrow 0$ i.e. $A \in VMO_{\rm loc}(B_2)$.

\begin{rem}
Let $1 < p < \infty$. Theorem \ref{Thm:1} remains valid if $\omega_{A,2}$ is replaced by $\omega_{A,p}$ and the regularity assumption $u \in H^1(B_4)$ is replaced by $u \in W^{1,p}(B_4)$, where the constant $C_*$ is now allowed to depend also on $p$. For $p \geq 2$, this follows from the inequality $\omega_{A,2} \leq \omega_{A,p}$ for those $p$. For $1 < p < 2$, see Proposition \ref{Prop:MOp}.
\end{rem}

It is clear that if $\omega_{A,2}$ satisfies \eqref{Eq:DiniL2}, then it satisfies \eqref{Eq:DiniY} (and hence \eqref{Eq:DiniX}). The following lemma gives examples which satisfy \eqref{Eq:DiniY} but not necessarily \eqref{Eq:DiniL2}.

\begin{lem}\label{Lem:Ex}
If $\displaystyle\limsup_{t \rightarrow 0} \omega_{A,2}(t) \ln \frac{1}{t} < \frac{1}{C_*}$, then $X_{A,2} = 0$. If $\displaystyle\liminf_{t \rightarrow 0} \omega_{A,2}(t) \ln \frac{1}{t} > \frac{1}{C_*}$, then $X_{A,2} = \infty$.
\end{lem}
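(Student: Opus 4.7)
The plan is to reduce both statements to a single asymptotic calculation. The key observation is that, under a pointwise bound $\omega_{A,2}(s) \sim a/\ln(1/s)$, the inner exponential $\exp\bigl(C_*\int_t^2 \omega_{A,2}(s)/s\,ds\bigr)$ becomes a power of $\ln(1/t)$ via the antiderivative $\int \frac{ds}{s\ln(1/s)} = -\ln\ln(1/s)$. The integrand defining $X_{A,2}$ is then comparable to $t^{-2}(\ln(1/t))^{C_* a - 1}$, and the change of variables $u = \ln(1/t)$ (giving $dt/t^2 = -e^u\,du$) yields
\[
\int_r^\delta \frac{(\ln(1/t))^\alpha}{t^2}\,dt = \int_{\ln(1/\delta)}^{\ln(1/r)} u^\alpha e^u\,du \sim \frac{1}{r}(\ln(1/r))^\alpha \qquad (r \to 0),
\]
for any fixed $\alpha \in \RR$ and $\delta \in (0, 1/e)$. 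Multiplying by $r$ gives an asymptotic of order $(\ln(1/r))^\alpha$, and the threshold exponent $\alpha = C_* a - 1$ changes sign exactly at $a = 1/C_*$.

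For the first statement I let $a := \limsup_{t\to 0}\omega_{A,2}(t)\ln(1/t) < 1/C_*$ and fix $\epsilon > 0$ with $a + \epsilon < 1/C_*$. I choose $\delta$ small enough that $\omega_{A,2}(s) \leq (a+\epsilon)/\ln(1/s)$ for $s \in (0, \delta]$. Using the trivial bound $\omega_{A,2} \leq 2\Lambda$ on $[\delta, 2]$, I obtain for $t \leq \delta$
\[
\int_t^2 \frac{\omega_{A,2}(s)}{s}\,ds \leq (a+\epsilon)\ln\ln(1/t) + C_\delta,
\]
so the integrand is bounded on $(0,\delta]$ by $C\cdot t^{-2}(\ln(1/t))^{C_*(a+\epsilon)-1}$, where the multiplicative constant absorbs $e^{C_* C_\delta}$ and $(a+\epsilon)$. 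By the asymptotic above, $r\int_r^\delta(\ldots)\,dt = O\bigl((\ln(1/r))^{C_*(a+\epsilon)-1}\bigr) \to 0$ since the exponent is negative; the contribution from $[\delta, 2]$ is bounded and is killed by the factor $r$. Hence $X_{A,2} = 0$.

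For the second statement I set $a := \liminf_{t\to 0}\omega_{A,2}(t)\ln(1/t) > 1/C_*$ and fix $\epsilon > 0$ with $a - \epsilon > 1/C_*$, so that $\omega_{A,2}(s) \geq (a-\epsilon)/\ln(1/s)$ on some $(0, \delta]$. Discarding the nonnegative part of the inner integral on $[\delta, 2]$ gives $\int_t^2 \omega_{A,2}(s)/s\,ds \geq (a-\epsilon)\ln\ln(1/t) - C_\delta$ for $t < \delta$, and hence a lower bound $c\cdot t^{-2}(\ln(1/t))^{C_*(a-\epsilon)-1}$ on the integrand. The same asymptotic then produces $r\int_r^2(\ldots)\,dt \geq c'(\ln(1/r))^{C_*(a-\epsilon)-1} \to \infty$, since the exponent is now positive.

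I do not foresee any real obstacle; the only substantive analytic point is the asymptotic for $\int^{U}u^\alpha e^u\,du$, which follows from one integration by parts ($\int u^\alpha e^u\,du = u^\alpha e^u - \alpha\int u^{\alpha-1}e^u\,du$, the second term being of lower order as $U\to\infty$). The rest is careful bookkeeping: splitting the integration at a threshold $\delta$ below which the hypotheses apply, and checking that the contributions from $[\delta, 2]$ and the multiplicative constants $e^{C_* C_\delta}$ do not affect the conclusion once the factor $r$ is applied.
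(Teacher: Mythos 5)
Your proposal is correct and follows essentially the same route as the paper: both arguments reduce, by comparing $\omega_{A,2}(s)$ with $\mathrm{const}/\ln(1/s)$ from above or below and using $\int \frac{ds}{s\ln(1/s)} = -\ln\ln(1/s)$ to turn the exponential into a power of $\ln(1/t)$, to the model quantity $r\int_r^\delta t^{-2}(\ln(1/t))^{\alpha}\,dt$ with the sign of $\alpha$ determined by whether the constant exceeds $1/C_*$. The only (cosmetic) difference is that you evaluate this model integral sharply via the substitution $u=\ln(1/t)$ and the asymptotic $\int^U u^\alpha e^u\,du\sim U^\alpha e^U$, whereas the paper derives one-sided bounds on the corresponding $\limsup$ by integrating by parts in $t$ and, in the first case, compensates for the non-sharp bound by letting the splitting point tend to $0$.
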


We note that, in case $\omega_{A,2}(t) \ln \frac{1}{t} \rightarrow 0$ as $t \rightarrow 0$, the BMO regularity of $\nabla u$ was proved by Acquistapace \cite{Acquistapace92-AMPA}. (See also \cite{Huang96-IUMJ}.)
 
By Lemma \ref{Lem:Ex}, an explicit example of $\omega_{A,2}$ satisfying \eqref{Eq:DiniY} (for any constant $C_*$) but not \eqref{Eq:DiniL2} is 
\[
\omega_{A,2}(t) \sim \frac{1}{\ln \frac{64}{t} (\ln \ln \frac{64}{t})^\beta}, \quad \beta \in (0,1].
\]
In addition, unlike \eqref{Eq:DiniL2} or \eqref{Eq:DiniL1}, \eqref{Eq:DiniY} does not imply that $A$ is continuous, e.g.
\[
A_{ij}^{\alpha\beta}(x) = \Big(2 + \sin \ln \ln \ln \frac{64}{|x|}\Big) \delta_{ij} \delta^{\alpha\beta}.
\]
(This can be checked using the fact that the function $s \mapsto \sin s$ is Lipschitz on $\RR$ and the fact that the function $x \mapsto \ell(x) := \ln \ln \ln \frac{64}{|x|}$ has $L^2$ mean oscillation $\omega_{\ell,2}(t) \sim \frac{1}{\ln \frac{64}{t} \ln \ln \frac{64}{t}}$.) 

When $A$ is merely of vanishing mean oscillation, we have the following result.

\begin{thm}
\label{Thm:2}
Let $A = (A_{ij}^{\alpha\beta})$ belong to $VMO(B_4)$ and satisfy \eqref{Eq:CoefB} and \eqref{Eq:CoefEll}. Then every solution $u \in W^{1,\infty}(B_4)$ of \eqref{Eq:u} satisfies $\nabla u \in VMO(B_2)$. 
\end{thm}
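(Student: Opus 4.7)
The plan is a standard Campanato-type perturbation argument with frozen coefficients, in which the $L^\infty$ bound on $\nabla u$ replaces the role normally played by any quantitative Dini-type control of $\omega_{A,2}$. First I would fix $x_0 \in \bar B_2$ and $0 < r \leq 1$, set $\bar A := (A)_{B_r(x_0)}$, and split $u = v + w$ on $B_r(x_0)$ where $v \in H^1(B_r(x_0))$ solves the frozen-coefficient system $\partial_\alpha(\bar A_{ij}^{\alpha\beta}\partial_\beta v^j) = 0$ with $v = u$ on $\partial B_r(x_0)$, and $w := u - v \in H^1_0(B_r(x_0))$. The constant-coefficient matrix $\bar A$ inherits the Legendre--Hadamard ellipticity from \eqref{Eq:CoefEll} (which passes to $A$ pointwise a.e.\ and is stable under averaging), so the classical Campanato--Morrey decay estimate applies to $v$ and gives, for all $0 < \rho \leq r$,
\[
\frac{1}{|B_\rho|}\int_{B_\rho(x_0)}|\nabla v - (\nabla v)_{B_\rho(x_0)}|^2\,dy \leq C\Big(\frac{\rho}{r}\Big)^{2} \frac{1}{|B_r|}\int_{B_r(x_0)}|\nabla v - (\nabla v)_{B_r(x_0)}|^2\,dy.
\]
Meanwhile $w$ satisfies $\partial_\alpha(\bar A_{ij}^{\alpha\beta}\partial_\beta w^j) = -\partial_\alpha((A - \bar A)_{ij}^{\alpha\beta}\partial_\beta u^j)$ with zero boundary data, so testing against $w$ itself and using the $L^\infty$ bound on $\nabla u$ yields
\[
\int_{B_r(x_0)}|\nabla w|^2\,dy \leq C\|\nabla u\|_{L^\infty(B_4)}^{2} \int_{B_r(x_0)}|A - \bar A|^{2}\,dy \leq C|B_r|\,\|\nabla u\|_{L^\infty}^{2}\,\omega_{A,2}(r)^{2}.
\]

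Setting $\Phi(x_0,\rho) := \frac{1}{|B_\rho|}\int_{B_\rho(x_0)}|\nabla u - (\nabla u)_{B_\rho(x_0)}|^{2}\,dy$ and combining the two estimates through the triangle inequality, together with the trivial bound $\frac{1}{|B_\rho|}\int_{B_\rho}|\nabla w|^{2} \leq (r/\rho)^n \frac{1}{|B_r|}\int_{B_r}|\nabla w|^{2}$, I would arrive at the key perturbation inequality
\[
\Phi(x_0,\rho) \leq C \Big(\frac{\rho}{r}\Big)^{2}\Phi(x_0,r) + C \Big(\frac{r}{\rho}\Big)^{n}\|\nabla u\|_{L^\infty}^{2}\,\omega_{A,2}(r)^{2}, \qquad 0 < \rho \leq r.
\]
Next, I would choose $\theta \in (0,1)$ small enough that $C\theta^{2} \leq \theta$ and iterate with $\rho = \theta r$ to obtain
\[
\Phi(x_0,\theta^{k} r) \leq \theta^{k}\,\Phi(x_0,r) + C'\,\|\nabla u\|_{L^\infty}^{2}\sum_{j=0}^{k-1}\theta^{k-1-j}\,\omega_{A,2}(\theta^{j} r)^{2}.
\]

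The VMO hypothesis on $A$ delivers $\omega_{A,2}(t) \to 0$ as $t \to 0$ uniformly in $x_0 \in \bar B_2$. Together with the crude bound $\Phi(x_0,r) \leq \|\nabla u\|_{L^\infty}^{2}$, a two-tier splitting of the sum---first choose $j_0$ so that $\omega_{A,2}(\theta^{j} r)^{2} < \varepsilon$ for $j \geq j_0$, then let $k \to \infty$ so the contribution from $j < j_0$ decays geometrically---shows that the right-hand side tends to $0$ as $k \to \infty$ uniformly in $x_0$. A doubling comparison between a generic $\rho$ and the nearest $\theta^{k} r$ (using that $L^2$ averages minimize the squared distance to constants) then upgrades this to $\lim_{\rho \to 0}\sup_{x_0 \in \bar B_2}\Phi(x_0,\rho) = 0$, which is exactly the VMO property of $\nabla u$ on $B_2$.

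The main obstacle I anticipate is that under the bare VMO hypothesis $\omega_{A,2}$ carries no quantitative decay rate, so the iterated error term is not directly summable as in Theorem \ref{Thm:1}; the geometric weight $\theta^{k-1-j}$ combined with the eventual smallness of $\omega_{A,2}$ must be exploited via the Ces\`aro-type splitting sketched above. The assumption $\nabla u \in L^\infty$ is essential precisely here, as it pulls $\|\nabla u\|_{L^\infty}$ outside of $\int|A - \bar A|^{2}|\nabla u|^{2}$ and converts the problematic interaction into the controllable factor $\|\nabla u\|_{L^\infty}^{2}|B_r|\omega_{A,2}(r)^{2}$, which vanishes with $r$ once $A \in VMO$.
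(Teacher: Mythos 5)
Your proposal is correct, and it takes a genuinely different (and more classical) route than the paper. The paper deduces Theorem \ref{Thm:2} from the quantitative estimate \eqref{Eq:Est3} of Proposition \ref{Prop:MO}, which is obtained by a telescoping chain of harmonic replacements $h_k$ at dyadic scales: the differences $h_{k+1}-h_k$ are controlled in $C^2$ via \eqref{Eq:08IV22-X4}, summed to give a Lipschitz-type bound on $\nabla h_{R_k}$ near the centre, and combined with the closeness estimate \eqref{Eq:08IV22-X1}; the VMO conclusion then follows because $r\int_{2r}^{2}\omega_{A,2}(t)t^{-2}\,dt\to 0$ whenever $\omega_{A,2}(t)\to0$. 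You instead run the standard Campanato excess-decay iteration on $\Phi(x_0,\rho)$ directly, with a single frozen-coefficient comparison at each scale and an $\varepsilon$-splitting of the iterated sum. Your route buys simplicity: because $\nabla u\in L^\infty$, you can estimate $\|\nabla w\|_{L^2}$ by testing the equation against $w$ itself, so you do not need the $L^1$-duality argument of Lemma \ref{Lem:HRep} (which the paper requires for the general $H^1$ case) nor any second-derivative estimates on the replacements. What you lose is the quantitative rate \eqref{Eq:Est3}, which the paper's argument provides and which is of independent interest. Two small points you should make explicit: (i) $A\in VMO$ is normally phrased via $L^1$ mean oscillation, and you pass to $\omega_{A,2}(t)\to0$ using the boundedness \eqref{Eq:CoefB} (via $|A-(A)_B|^2\le 2\Lambda|A-(A)_B|$) or John--Nirenberg; (ii) the derivation of pointwise Legendre--Hadamard ellipticity from the integral condition \eqref{Eq:CoefEll} (oscillating test functions at Lebesgue points), which justifies both the solvability of the frozen problem and the Campanato decay for $v$ --- the paper relies on the same fact implicitly when applying Lemma \ref{Lem:HRep} with $\bar A_k=(A)_{B_{2R_k}}$.
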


The obtained regularity in the above theorems appears sharp. As in \cite{JinMvSchaftingen09}, counterexamples can be produced to show that, under \eqref{Eq:DiniY}, 
\begin{itemize}
\item solutions of \eqref{Eq:u} may not have bounded gradients (though their gradients are of vanishing mean oscillation by Theorem \ref{Thm:1}),
\item $W^{1,\infty}$ solutions of \eqref{Eq:u} may not be differentiable (though their gradients are of vanishing mean oscillation by Theorem \ref{Thm:2}).
\end{itemize}

\begin{prop}\label{Prop:c1}
There exist a coefficient matrix $A = (A_{ij}^{\alpha\beta}) \in C(\bar B_4)$ satisfying \eqref{Eq:CoefB}, \eqref{Eq:CoefEll} and \eqref{Eq:DiniY} and a solution $u \in H^1(B_4)$ of \eqref{Eq:u} such that $\nabla u \in VMO(B_4)$ but $\nabla u \notin L^\infty_{\rm loc}(B_2)$.
\end{prop}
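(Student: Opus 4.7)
The plan is to modify the construction of Jin, Maz'ya and van Schaftingen \cite{JinMvSchaftingen09}, who produce $A$ with modulus of continuity $\bar\omega_A(t) \sim 1/|\ln t|$ and a solution $u \in H^1(B_4)$ whose gradient grows like $\ln|\ln(1/|x|)|$ near the origin, so that $\nabla u \in BMO \setminus (L^\infty \cup VMO)$. To upgrade $\nabla u$ from $BMO$ to $VMO$ while keeping it unbounded, one slows down the coefficient oscillation by one extra logarithm: replace $1/|\ln t|$ by $1/(|\ln t|\, \ln|\ln t|)$. By Lemma \ref{Lem:Ex} the resulting matrix satisfies $X_{A,2} = 0$, so Theorem \ref{Thm:1} automatically yields $\nabla u \in VMO_{\rm loc}(B_2)$; only the unboundedness of $\nabla u$ near the origin must be arranged by hand.

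Concretely, I would work in $n = 2$ with $N = 1$ (higher dimensions or larger $N$ follow by a trivial extension). Choose a rapidly decreasing sequence of scales $r_k \downarrow 0$, say $r_k = 2^{-2^k}$, points $x_k$ with $|x_k| = r_k$, and radii $\rho_k$ with $2 \rho_k < r_k - r_{k+1}$ so that the balls $2 B_k := B(x_k, 2 \rho_k)$ are pairwise disjoint in $B_1$. On each $2 B_k$ one builds a JMvS-type local pair $(u_k, A_k)$ with $A_k = I$ outside $\tfrac{3}{2} B_k$, such that $u_k$ solves $\partial_\alpha(A_k^{\alpha\beta} \partial_\beta u_k) = 0$ in $2 B_k$, $\|A_k - I\|_{L^\infty} \leq a_k$, and $\|\nabla u_k\|_{L^\infty(B_k)} \sim M_k$, with $a_k := 1/(|\ln \rho_k|\, \ln|\ln \rho_k|)$ and $M_k := 1/a_k \to \infty$. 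Paste together by $A := I + \sum_k \chi_k (A_k - I)$ and $u := \sum_k \zeta_k u_k + w$, where $\chi_k, \zeta_k$ are smooth cut-offs adapted to $B_k \subset 2 B_k$ and $w \in H^1(B_4)$ is a small global corrector restoring exact solvability of \eqref{Eq:u}, exactly as in \cite{JinMvSchaftingen09}.

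The main obstacle lies in the local model, where three requirements must be balanced: (i) $\|\nabla u_k\|_{L^\infty(B_k)} \sim M_k \to \infty$; (ii) $\omega_{A,2}(t) \lesssim 1/(|\ln t|\, \ln|\ln t|)$ uniformly in $k$ and in the scale $t$, which amounts to controlling the contribution of each perturbation $A_k - I$ at every dyadic scale; and (iii) the global corrector $w$ stays subleading, so that $|\nabla u(x_k)| \gtrsim M_k(1 - o(1))$ and $u$ remains in $H^1(B_4)$. With the double-exponential decay $r_k = 2^{-2^k}$, the sum of $\omega_{A,2}$ contributions at any scale $t \sim \rho_k$ telescopes into a geometric series dominated by $a_k$; the product $M_k a_k = 1$ makes the $H^1$ energies of the bumps summable. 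Once these estimates close, unboundedness of $\nabla u$ at $0$ follows from $|\nabla u(x_k)| \to \infty$, the hypothesis $X_{A,2} = 0$ from Lemma \ref{Lem:Ex}, and $\nabla u \in VMO(B_4)$ from Theorem \ref{Thm:1} combined with the interior smoothness of $u$ away from the origin.
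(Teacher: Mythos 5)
Your high-level strategy is right---slow the coefficient oscillation to $\bar\omega_A(t)\sim 1/(\ln\frac1t\,\ln\ln\frac1t)$ so that Lemma \ref{Lem:Ex} gives $X_{A,2}=0$ and Theorem \ref{Thm:1} gives the VMO conclusion---but the heart of the proposition, namely actually exhibiting a pair $(A,u)$ with $\nabla u$ unbounded, is not carried out, and the quantitative mechanism you propose for it does not work as stated. A single bump $A_k$ supported in $2B_k$ with $\|A_k-I\|_{L^\infty}\le a_k$ cannot produce a gradient amplification $M_k\sim 1/a_k$ on $B_k$: amplification of the gradient is governed not by the sup-norm of the perturbation at one scale but by the accumulated factor $\exp\big(C\int\omega_{A,2}(t)\,t^{-1}dt\big)$ over a \emph{range} of scales (this is exactly estimate \eqref{Eq:Est1}). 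With your target modulus $\omega_{A,2}(t)\lesssim 1/(|\ln t|\,\ln|\ln t|)$ that integral grows only like $\ln\ln\ln(1/t)$, so to reach amplification $M_k=|\ln\rho_k|\,\ln|\ln\rho_k|$ each ``bump'' would have to contain coefficient oscillation down to scales of order $\exp(-\exp(M_k^{1/C}))$; your bumps, as described, live at the single scale $\rho_k$, and the bookkeeping identity $M_k a_k=1$ does not address this. In short, the ``main obstacle'' you flag in your second paragraph is the entire content of the proposition, and it is left as a list of requirements to be balanced rather than proved.

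The paper's proof bypasses all of the gluing. It uses the explicit radial ansatz of \cite[Lemma 2.1]{JinMvSchaftingen09} directly at the origin: $A^{\alpha\beta}(x)=\delta^{\alpha\beta}+a(|x|)\big(\delta^{\alpha\beta}-x^\alpha x^\beta/|x|^2\big)$ and $u(x)=x^1v(|x|)$, for which the system reduces to the ODE $v''+\frac{n+1}{r}v'-\frac{n-1}{r^2}av=0$. Choosing $v(r)=\ln\ln\frac{64}{r}$ and solving for $a$ gives $a(r)\sim-\frac{n}{(n-1)\ln\frac{64}{r}\,\ln\ln\frac{64}{r}}$, hence $\bar\omega_A(t)\sim 1/(\ln\frac{64}{t}\,\ln\ln\frac{64}{t})$ and $X_{A,2}=0$ by Lemma \ref{Lem:Ex}, while $|\nabla u|\sim\ln\ln\frac{64}{|x|}\to\infty$ and $\nabla u\in VMO(B_4)$ can be read off explicitly. (Note also that the proposition asserts $\nabla u\in VMO(B_4)$, whereas Theorem \ref{Thm:1} only yields $VMO_{\rm loc}(B_2)$; the explicit formula settles this, your construction would need an extra argument.) If you want to salvage your approach, observe that a single ``bump'' already has to be a full multi-scale radial model of this type; once you accept that, placing one such model at the origin and dropping the gluing recovers the paper's argument.
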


\begin{prop}\label{Prop:c2}
There exist a coefficient matrix $A = (A_{ij}^{\alpha\beta})\in C(\bar B_4)$ satisfying \eqref{Eq:CoefB}, \eqref{Eq:CoefEll} and \eqref{Eq:DiniY} and a solution $u \in H^1(B_4)$ of \eqref{Eq:u} such that $\nabla u \in L^\infty(B_4) \cap VMO(B_4)$ but $\nabla u \notin C(B_2)$.
\end{prop}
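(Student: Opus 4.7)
The plan is to adapt the construction of Jin, Maz'ya and van Schaftingen \cite{JinMvSchaftingen09}, adjusting the oscillation rate of the coefficient so that it not only lies in $VMO$ but also satisfies \eqref{Eq:DiniY}, and tuning the amplitude of the solution perturbation to the pathology asked for. It suffices to build the example in dimension $n = 2$ with $N = 1$ and scalar coefficient $A_{ij}^{\alpha\beta}(x) = a(x)\delta_{ij}\delta^{\alpha\beta}$; the extension to $n \geq 3$ is immediate by taking $u$ and $A$ independent of $(x_3, \ldots, x_n)$. Once a continuous $a$ with $X_{a,2} = 0$ and a Lipschitz solution $u$ with discontinuous gradient at the origin are produced, Theorem \ref{Thm:2} automatically delivers $\nabla u \in VMO(B_2)$.

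The first step is to construct a base cell on the model annulus $\mathcal A = \{y : 1/2 \leq |y| \leq 1\}$: a coefficient $a_*$ with $a_* \equiv 1$ in a neighborhood of $\partial \mathcal A$ and a solution $u_*$ of $\mathrm{div}(a_* \nabla u_*) = 0$ in $\mathcal A$ with $u_* \equiv y_1$ near $\partial \mathcal A$ but with $\nabla u_* - e_1$ of order one at the interior point $|y| = 3/4$. I would obtain this by choosing a smooth radial profile $\phi_*:[1/2,1] \to \RR$ with $\phi_* \equiv 1$ and $\phi_*' \equiv 0$ near the endpoints, symmetric about $|y| = 3/4$ and nonconstant; setting $u_*(y) = y_1 \phi_*(|y|)$, the coefficient $a_* = e^{\alpha_*}$ is recovered from the first-order ODE $\alpha_*'(r) = -\bigl(r\phi_*''(r) + (n+1)\phi_*'(r)\bigr)/\bigl(r\phi_*'(r) + \phi_*(r)\bigr)$ derived from substituting into the PDE, and the symmetry of $\phi_*$ guarantees $\alpha_*(1/2) = \alpha_*(1) = 0$ (modulo $O(\|\phi_* - 1\|^2)$ corrections that can be absorbed by a fixed-point argument).

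The example is then assembled dyadically: for $2^{-k-1} \leq |x| \leq 2^{-k}$, define
\[
a(x) := 1 + \epsilon_k\bigl(a_*(2^k|x|) - 1\bigr), \qquad u(x) := x_1 + c_k\cdot 2^{-k}\bigl(u_*(2^k x) - 2^k x_1\bigr),
\]
extended by $a \equiv 1$ and $u \equiv x_1$ outside a small ball $B_{2^{-k_0}}$. Since $a_* \equiv 1$ and $u_* \equiv y_1$ near $\partial \mathcal A$, both $a$ and the flux $a\,\partial_\nu u$ match trivially across each dyadic sphere, so $u$ is a global weak solution. Continuity of $a$ on $\bar B_4$ follows from $\epsilon_k \to 0$; ellipticity holds for $\sup_k \epsilon_k$ small; and the estimate $\omega_{a,2}(t) \leq C\epsilon_k\|a_* - 1\|_{L^\infty}$ for $t \sim 2^{-k}$, together with a choice $\epsilon_k = o(1/\log k)$, gives $\omega_{a,2}(t)\log(1/t) \to 0$, hence $X_{a,2} = 0$ via Lemma \ref{Lem:Ex}. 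Choosing $\{c_k\}$ bounded but with at least two accumulation points would then make $\nabla u(x_k) = e_1 + c_k\kappa e_1$, for $x_k = 3\cdot 2^{-k-2}e_1$ and $\kappa = \phi_*(3/4) - 1 + (3/4)\phi_*'(3/4) \neq 0$, fail to converge as $k \to \infty$, yielding $\nabla u \notin C(B_2)$.

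\textbf{Main obstacle.} The crux is to decouple the amplitudes $\epsilon_k$ and $c_k$: the linearization of the cell equation ties $\|u_* - y_1\|$ to $\|a_* - 1\|$, so the naive assembly forces $c_k \sim \epsilon_k$, whence $\nabla u \to e_1$ continuously at the origin and the construction fails. The resolution, following \cite{JinMvSchaftingen09}, is to allow the base profile $\phi_*$ itself to depend on $k$ (with an amplitude chosen so that $c_k$ remains $O(1)$ while $\epsilon_k \to 0$ slowly enough for $X_{a,2} = 0$), or equivalently to choose the cell so that $u_* - y_1$ solves a scale-invariant homogeneous boundary-value problem whose amplitude is a free parameter not tied to $\|a_* - 1\|_{L^\infty}$. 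Verifying that the decoupled cells can be assembled while maintaining continuity of $a$, the estimate $X_{a,2} = 0$, and the flux matching across dyadic spheres --- in particular checking that the correction required to maintain $u$ as a true weak solution does not wash out the desired oscillation of $\nabla u$ --- is the principal computational content inherited from the JMvS construction.
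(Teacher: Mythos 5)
You have correctly identified the target pathology and the right source (\cite{JinMvSchaftingen09}), and you have honestly flagged the main obstacle --- but the obstacle you flag is fatal to the construction as you have set it up, and the proposed resolution does not exist within your framework. In your dyadic gluing, $u \equiv x_1$ exactly on every dyadic sphere and the coefficient is the isotropic scalar $a(x)\delta^{\alpha\beta}$ with $\|a-1\|_{L^\infty(A_k)} \leq C\epsilon_k$ on the $k$-th annulus $A_k$. Then $w = u - x_1$ solves $\mathrm{div}(a\nabla w) = -\mathrm{div}((a-1)e_1)$ with $w = 0$ on $\partial A_k$, so energy plus interior estimates force $|\nabla w| \lesssim \epsilon_k$ at the mid-radius points $x_k$; since continuity of $a$ at the origin requires $\epsilon_k \to 0$, you get $\nabla u(x_k) \to e_1$ and the gradient \emph{is} continuous at $0$. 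There is no ``free amplitude parameter not tied to $\|a_* - 1\|$'': the homogeneous Dirichlet problem on the cell has only the trivial solution, so the amplitude of $u_* - y_1$ is always slaved to $a_* - 1$. Letting the base profile depend on $k$ does not help as long as the boundary data on each dyadic sphere is pinned to $x_1$.

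What actually makes the example work (both in \cite{JinMvSchaftingen09} and in the paper) is a different mechanism that your setup excludes. The paper takes the \emph{anisotropic} coefficient $A^{\alpha\beta} = \delta^{\alpha\beta} + a(|x|)\bigl(\delta^{\alpha\beta} - \frac{x^\alpha x^\beta}{|x|^2}\bigr)$ (a purely tangential perturbation of the identity) and the global ansatz $u = x^1 v(|x|)$, which reduces \eqref{Eq:u} to the ODE $r^2 v'' + (n+1) r v' = (n-1)\, a\, v$, i.e.\ $a = \frac{r^2 v'' + (n+1) r v'}{(n-1)v}$. The decoupling you are looking for is then automatic: one chooses $v(r) = 2 + \sin\ln\ln\ln\frac{64}{r}$, which oscillates with amplitude $O(1)$ (so $\partial_1 u \supset v(|x|)$ has no limit at $0$, while $r v' \to 0$ keeps $\nabla u$ bounded, and $\nabla u \in VMO$ follows from Theorem \ref{Thm:2} or directly), yet the induced $a$ is of size $O\bigl(\frac{1}{\ln\frac{64}{r}\,\ln\ln\frac{64}{r}}\bigr)$ because $a$ sees only the \emph{derivatives} of $v$, which pick up small chain-rule factors from the triple logarithm. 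This gives $\bar\omega_A(t)\ln\frac{1}{t} \to 0$ and hence $X_{A,2} = 0$ by Lemma \ref{Lem:Ex}. In short: the boundary data of $u$ on the spheres $|x| = r$ must itself drift (through $v$), rather than being matched to $x_1$ at every scale; and the coefficient perturbation must act tangentially so that an $O(1)$ radial drift of $v$ costs only an $o(1/\ln\frac{1}{r})$ perturbation of $A$. Without importing this mechanism, your proposal does not close.
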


Theorem \ref{Thm:1} and Theorem \ref{Thm:2} are consequences of the following proposition on the mean oscillation of the gradient $\nabla u$ in terms of the $L^2$ mean oscillation $\omega_{A,2}$ of $A$.

\begin{prop}\label{Prop:MO}
Let $A= (A_{ij}^{\alpha\beta})$ satisfy \eqref{Eq:CoefB} and \eqref{Eq:CoefEll}. Then there exists a constant $C_* > 0$, depending only on $n$, $N$, $\Lambda$ and $\lambda$ such that for every $u \in H^1(B_4)$ satisfying \eqref{Eq:u} and for $0 < r \leq R/4 \leq 1/2$, there hold
\begin{equation}
\int_{B_r} |\nabla u|^2\,dx \leq \frac{C_*r^n}{R^n} \exp\Big(2C_* \int_{2r}^R \frac{\omega_{A,2}(t)}{t}\,dt\Big) \int_{B_R} |\nabla u|^2\,dx,
	\label{Eq:Est1}
\end{equation}
and
\begin{multline}
\int_{B_r} |\nabla u - (\nabla u)_r|^2\,dx \leq \frac{C_*r^{n+2}}{R^n} \int_{B_R} |\nabla u|^2\,dx \times\\
	\times \Big\{\int_{2r}^R \frac{\omega_{A,2}(t)}{t^2} \exp\Big(C_* \int_{t}^R \frac{\omega_{A,2}(s)}{s}\,ds\Big)\,dt\Big\}^2,
	\label{Eq:Est2}
\end{multline}
where $(\nabla u)_r = \frac{1}{|B_r|} \int_{B_r} \nabla u\,dx$ for $0 < r \leq 2$.
 
Moreover, if $u \in W^{1,\infty}(B_4)$, then, for $0 < r \leq R/4 \leq 1/2$,
\begin{equation}
\int_{B_r} |\nabla u - (\nabla u)_r|^2\,dx \leq \frac{C_*r^{n+2}}{R^n}\Big\{\int_{2r}^R \frac{\omega_{A,2}(t)}{t^2}\,dt\Big\}^2\,\sup_{B_R} |\nabla u|^2 .
	\label{Eq:Est3}
\end{equation}
\end{prop}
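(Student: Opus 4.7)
The plan is a freezing-and-iteration argument: on each ball I compare $u$ with the solution of a constant-coefficient system obtained by averaging $A$, then iterate the comparison over dyadic scales and convert the resulting discrete recursion into the integral-exponential form of \eqref{Eq:Est1}--\eqref{Eq:Est2} via a Gronwall-type step. For each $B_\rho \subset B_R$, let $v \in H^1(B_\rho)$ solve the constant-coefficient system with $A$ replaced by its average $(A)_{B_\rho}$ and boundary values $v = u$ on $\partial B_\rho$; set $w = u - v \in H_0^1(B_\rho)$. Classical interior estimates for constant-coefficient elliptic systems give, for $\theta \in (0, 1/4]$,
\[
\int_{B_{\theta\rho}} |\nabla v|^2\,dx \leq C_0 \theta^n \int_{B_\rho} |\nabla v|^2\,dx, \quad \int_{B_{\theta\rho}} |\nabla v - (\nabla v)_{\theta\rho}|^2\,dx \leq C_0 \theta^{n+2} \int_{B_\rho} |\nabla v - (\nabla v)_\rho|^2\,dx,
\]
with $C_0 = C_0(n, N, \lambda, \Lambda)$.

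For $w$, the weak formulation reads $\int (A)_{B_\rho} \nabla w \cdot \nabla \phi = -\int (A - (A)_{B_\rho}) \nabla u \cdot \nabla \phi$ for $\phi \in H_0^1(B_\rho)$. Testing with $\phi = w$, applying Cauchy--Schwarz, and splitting $|A - (A)_{B_\rho}|$ via H\"older so as to combine the pointwise bound $|A - (A)_{B_\rho}| \leq 2\Lambda$, the identity $\int |A - (A)_{B_\rho}|^2 = \omega_{A,2}(\rho)^2 |B_\rho|$, and the Gehring--Meyers higher integrability $\nabla u \in L^{2+\delta}_{\rm loc}$ yields the key perturbation bound
\[
\int_{B_\rho} |\nabla w|^2\,dx \leq C\, \omega_{A,2}(\rho)^{2\kappa} \int_{B_{2\rho}} |\nabla u|^2\,dx
\]
for some $\kappa \in (0, 1]$ depending on $n, N, \lambda, \Lambda$. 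In the Lipschitz case, $\|\nabla u\|_{L^\infty}$ replaces higher integrability and gives directly $\int_{B_\rho} |\nabla w|^2 \leq C \omega_{A,2}(\rho)^2 |B_\rho| \|\nabla u\|_{L^\infty(B_\rho)}^2$. Combining with $\nabla u = \nabla v + \nabla w$ and the triangle inequality produces the one-step recursions
\[
\int_{B_{\theta\rho}} |\nabla u|^2 \leq \theta^n (C_0 + C \omega_{A,2}(\rho)^{2\kappa}) \int_{B_{2\rho}} |\nabla u|^2,
\]
\[
\int_{B_{\theta\rho}} |\nabla u - (\nabla u)_{\theta\rho}|^2 \leq C_0 \theta^{n+2} \int_{B_\rho} |\nabla u - (\nabla u)_\rho|^2 + C \omega_{A,2}(\rho)^{2\kappa} \int_{B_{2\rho}} |\nabla u|^2.
\]

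Iterating the first recursion over the scales $\rho_k = \theta^k R$ and using $\prod_k (1 + C \omega_{A,2}(\rho_k)^{2\kappa}) \leq \exp(C \sum_k \omega_{A,2}(\rho_k)^{2\kappa}) \approx \exp(C \int_r^R \omega_{A,2}(t)^{2\kappa}/t\,dt)$ yields \eqref{Eq:Est1} (in the VMO regime $\omega_{A,2} \leq 1$, so $\omega_{A,2}^{2\kappa}$ can be replaced by $\omega_{A,2}$ up to enlarging $C_*$). For \eqref{Eq:Est2}, iterate the second recursion, bounding $\int |\nabla u|^2$ at each scale via \eqref{Eq:Est1}, and telescope the resulting sum into $\int_{2r}^R \omega_{A,2}(t) t^{-2} \exp(C_* \int_t^R \omega_{A,2}(s)/s\,ds)\,dt$ before squaring. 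Estimate \eqref{Eq:Est3} follows from the same iteration with the Lipschitz perturbation bound, where the uniform $\|\nabla u\|_{L^\infty}$ eliminates the need for the exponential weight.

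The main technical hurdle is the perturbation estimate for $\|\nabla w\|_{L^2}$: without $\nabla u \in L^\infty$, the H\"older step produces $\omega_{A,2}^{2\kappa}$ with $\kappa = (p-2)/p < 1$ determined by the Gehring exponent $p > 2$, rather than the clean $\omega_{A,2}^2$ obtainable in the Lipschitz case. Matching the linear dependence on $\omega_{A,2}$ inside the exponential in \eqref{Eq:Est1}--\eqref{Eq:Est2} requires $2\kappa \geq 1$, which can be secured by invoking the improved $L^p$-theory for divergence-form systems with VMO coefficients to push $\kappa$ close to $1$ and absorbing the loss into $C_*$. The remaining bookkeeping --- converting dyadic iterations into the integral-exponential form via a Gronwall-type argument --- is routine.
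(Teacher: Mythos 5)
Your overall architecture (freezing the coefficients on dyadic balls, comparing with the constant-coefficient replacement, and converting the resulting recursion into an exponential via a Gronwall-type product) matches the paper's, but two of your key steps would fail as proposed. First, the perturbation estimate. Testing $w=u-v$ against itself and invoking Gehring/Meyers gives $\|\nabla w\|_{L^2(B_\rho)}\le C\,\omega_{A,2}(\rho)^{\kappa}\|\nabla u\|_{L^2(B_{2\rho})}$ with $\kappa=\delta/(2+\delta)$, and your fix requires $2\kappa\ge 1$, i.e.\ $\nabla u\in L^4_{\rm loc}$. But Proposition \ref{Prop:MO} is stated for arbitrary bounded measurable $A$ satisfying only \eqref{Eq:CoefB}--\eqref{Eq:CoefEll}, with $C_*$ depending only on $n,N,\Lambda,\lambda$; the Meyers exponent $\delta$ guaranteed under these hypotheses alone is small, and the improved $W^{1,p}$ theory for VMO coefficients carries constants depending on the VMO modulus of $A$. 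Importing it would make $C_*$ depend on $A$ itself, which is fatal since $C_*$ appears inside the hypothesis $X_{A,2}<\infty$ of Theorem \ref{Thm:1}. The paper's Lemma \ref{Lem:HRep} gets the clean linear dependence $\|\nabla(u-h)\|_{L^2}\le C\,\omega_{A,2}(2R)\|\nabla u\|_{L^2(B_{2R})}$ with no loss and no extra hypotheses by a different mechanism: it estimates $u-h$ on an \emph{interior} ball, where the source term is $(\bar A-A)\nabla h$ with $\nabla h$ bounded (interior regularity of the constant-coefficient solution), and controls the resulting lower-order term $\|u-h\|_{L^1}$ by a duality argument with an auxiliary adjoint problem. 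You need some substitute for this.

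Second, the iteration itself. Your one-step recursions carry a constant $C_0>1$ in front of the main term, coming from the interior estimates for the frozen system. After normalizing, the energy recursion reads $\phi(\theta\rho)\le C_0\phi(\rho)+\dots$ and iterating produces a factor $C_0^k\sim (R/r)^{\varepsilon}$, so you only get \eqref{Eq:Est1} with exponent $n-\varepsilon$; worse, the excess recursion at the critical exponent $n+2$ simply does not close (this is the classical obstruction that makes Campanato iteration yield $C^{1,\alpha}$ for $\alpha<1$ but not BMO of the gradient), and \eqref{Eq:Est2}, \eqref{Eq:Est3} require exactly $n+2$. The paper circumvents both problems by tracking $a_k=R_k^{-n/2}\|\nabla(u-h_k)\|_{L^2(B_{R_k})}$ and $b_k=\|\nabla h_k\|_{L^\infty(B_{R_k})}$, for which the recursion has leading constant exactly $1$ (namely $a_{k+1}+b_{k+1}\le(1+C\omega(2R_k))(a_k+b_k)$, since the sup norm of $\nabla h_k$ does not deteriorate under restriction), and then proves \eqref{Eq:Est2} not by iterating an excess functional but by telescoping $h_{R_k}=\sum_j(h_{R_j}-h_{R_{j-1}})$ and summing the second-derivative bounds on the increments to control $|\nabla h_{R_k}(x)-\nabla h_{R_k}(0)|$ linearly in $|x|$. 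Your phrase ``telescope the resulting sum'' elides precisely this step; without it the argument for \eqref{Eq:Est2} and \eqref{Eq:Est3} does not go through.
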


\begin{rem}
Let $1 < p < 2$. Under an additional assumption that $[A]_{BMO(B_4)}$ is sufficiently small, the estimates in Proposition \ref{Prop:MO} hold if $\omega_{A,2}$ is replaced by $\omega_{A,p}$ and the regularity assumption $u \in H^1(B_4)$ is replaced by $u \in W^{1,p}(B_4)$. We do know know if this smallness assumption can be dropped except for $p$ close to $2$. See Proposition \ref{Prop:MOp}.
\end{rem}

\bigskip
\noindent{\bf Acknowledgment.} The author would like to thank Professor Yanyan Li for drawing his attention to the problem.

\section{Proof of the main results}\label{Sec:Proof}

\begin{proof}[Proof of Lemma \ref{Lem:Ex}] We claim: For $\delta \in (0,1)$ and $a \in (0,\infty)$, the limit
\[
L_a = \limsup_{r \rightarrow 0} r \int_r^\delta \frac{1}{t^2} (\ln\frac{1}{t})^{a - 1}\,dt
\]
satisfies $L_a = \infty$ if $a > 1$, $L_a = 1$ if $a = 1$ and $L_a \leq (\ln \frac{1}{\delta})^{a-1}$ if $a < 1$.

When $a = 1$, the claim is clear. By integrating by parts, we have
\begin{equation}
\int_r^\delta \frac{1}{t^2} (\ln\frac{1}{t})^{a - 1}\,dt
	= - \frac{1}{t} (\ln\frac{1}{t})^{a - 1}\Big|_r^\delta - (a-1) \int_r^\delta \frac{1}{t^2} (\ln \frac{1}{t})^{a-2}\,dt.
	\label{Eq:22IV22-L1}
\end{equation}
If $a < 1$, we see from \eqref{Eq:22IV22-L1} that
\begin{align*}
L_a
	&= |a - 1| \limsup_{r \rightarrow 0} r\int_r^\delta \frac{1}{t^2} (\ln\frac{1}{t})^{a - 2}\,dt
		\leq |a - 1| \limsup_{r \rightarrow 0} \int_r^\delta \frac{1}{t} (\ln\frac{1}{t})^{a - 2}\,dt\\
	&=  \limsup_{r \rightarrow 0} (\ln\frac{1}{t})^{a - 1}\Big|_r^\delta
		= (\ln\frac{1}{\delta})^{a - 1}.
\end{align*}
To prove the claim in the case $a > 1$, we may assume without loss of generality that $a < 2$. Note that \eqref{Eq:22IV22-L1} implies
\[
L_a + (a-1) L_{a-1} = \limsup_{r \rightarrow 0 } r \Big\{- \frac{1}{t} (\ln\frac{1}{t})^{a - 1}\Big|_r^\delta\Big\} = \infty.
\]
As $L_{a-1}$ is finite (as $1 < a < 2$), we thus have that $L_a = \infty$. The claim is proved.

We now apply the claim to obtain the desired conclusions. Consider first the case that $\limsup_{t \rightarrow 0} \omega_{A,2}(t) \ln \frac{1}{t} < \frac{1}{C_*}$. Then there exist $\varepsilon \in (0,\frac{1}{C_*})$ and $\delta \in (0,1)$ so that $\omega_{A,2}(t) \leq \varepsilon (\ln \frac{1}{t})^{-1}$ in $(0,\delta)$. For $\hat \delta \in (0,\delta)$, we compute
\begin{align*}
X_{A,2} 
	&= \limsup_{r\rightarrow 0} r \int_r^{\hat\delta} \frac{\omega_{A,2}(t)}{t^2} \exp\Big(C_* \int_{t}^2 \frac{\omega_{A,2}(s)}{s}\,ds\Big)\,dt\\
	&\leq \varepsilon (\ln \frac{1}{\hat\delta})^{-C_*\varepsilon} \exp\Big(C_* \int_{\hat\delta}^2 \frac{\omega_{A,2}(s)}{s}\,ds\Big) \limsup_{r\rightarrow 0} r \int_r^{\hat\delta} \frac{1}{t^2}(\ln \frac{1}{t})^{C_* \varepsilon-1} \,dt.
\end{align*}
As $C_*\varepsilon < 1$, we can apply the claim to obtain
\begin{align*}
X_{A,2} 
	&\leq \varepsilon (\ln \frac{1}{\hat\delta})^{-1} \exp\Big(C_* \int_{\hat\delta}^2 \frac{\omega_{A,2}(s)}{s}\,ds\Big)\\ 
	&\leq \varepsilon (\ln \frac{1}{\hat\delta})^{-1 + C_* \varepsilon} (\ln \frac{1}{\delta})^{-C_*\varepsilon}  \exp\Big(C_* \int_{\delta}^2 \frac{\omega_{A,2}(s)}{s}\,ds\Big).
\end{align*}
Sending $\hat\delta \rightarrow 0$, we obtain that $X_{A,2} = 0$.

Consider next the case that $\liminf_{t \rightarrow 0} \omega_{A,2}(t) \ln \frac{1}{t} > \frac{1}{C_*}$. Then there exist $b  > \frac{1}{C_*}$ and $\delta \in (0,1)$ so that $\omega_{A,2}(t) \geq b (\ln \frac{1}{t})^{-1}$ in $(0,\delta)$. We then have
\begin{align*}
X_{A,2} 
	&= \limsup_{r\rightarrow 0} r \int_r^{\delta} \frac{\omega_{A,2}(t)}{t^2} \exp\Big(C_* \int_{t}^2 \frac{\omega_{A,2}(s)}{s}\,ds\Big)\,dt\\
	&\geq b (\ln \frac{1}{\delta})^{-C_*b} \exp\Big(C_* \int_{\delta}^2 \frac{\omega_{A,2}(s)}{s}\,ds\Big) \limsup_{r\rightarrow 0} r \int_r^{\delta} \frac{1}{t^2}(\ln \frac{1}{t})^{C_* b - 1} \,dt.
\end{align*}
As $C_*b > 1$, we deduce from the claim that $X_{A,2} = \infty$ as desired.
\end{proof}

\begin{proof}[Proof of Theorem \ref{Thm:1} and Theorem \ref{Thm:2}]
The results follow immediately from Proposition \ref{Prop:MO}.
\end{proof}

In order to prove Proposition \ref{Prop:MO}, we need the following estimate for harmonic replacements. (Compare  \cite[Lemma 3.5]{ByunWang04-CPAM}, \cite[Lemma 3.1]{LiNirenberg03-CPAM}.)

\begin{lem}\label{Lem:HRep}
Let $A, \bar A$ satisfy \eqref{Eq:CoefB} and \eqref{Eq:CoefEll} with $\bar A$ being constant in $B_4$ and $f = (f_i^\alpha) \in L^2(B_4)$. Let $R \in (0,2)$ and suppose $u, h \in H^1(B_{2R})$ satisfy
\begin{align*}
\partial_\alpha(A{}_{ij}^{\alpha\beta} \partial_\beta u^j) 
	&= \partial_\alpha f_i^\alpha\quad \text{ in } B_{2R}, \quad i = 1, \ldots, N,\\
\partial_\alpha(\bar A{}_{ij}^{\alpha\beta} \partial_\beta h^j) 
	&= 0 \quad \text{ in } B_{2R}, \quad i = 1, \ldots, N,\\
u
	& = h \quad  \text{ on } \partial B_{2R}.
\end{align*}
Then there exists a constant $C > 0$ depending only on $n, N, \Lambda$ and $\lambda$ such that
\[
\|\nabla (u-h)\|_{L^2(B_{3R/2})} \leq C \Big[\|f\|_{L^2(B_{2R})} + R^{-n/2} \|A - \bar A\|_{L^2(B_{2R})}  \|\nabla u\|_{L^2(B_{2R})}\Big].
\]
\end{lem}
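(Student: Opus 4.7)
The plan is to set up a PDE for $w:=u-h$, apply an energy estimate, and exploit interior $L^\infty$ regularity of the constant-coefficient harmonic replacement $h$ on $B_{3R/2}$. First, subtracting the two equations, $w \in H_0^1(B_{2R})$ satisfies
\[
\partial_\alpha\bigl(\bar A_{ij}^{\alpha\beta}\partial_\beta w^j\bigr) = \partial_\alpha f_i^\alpha + \partial_\alpha\bigl((\bar A-A)_{ij}^{\alpha\beta}\partial_\beta u^j\bigr) \text{ in } B_{2R}.
\]
Using the decomposition $\nabla u = \nabla w + \nabla h$ in the source and moving the $(\bar A-A)\nabla w$ piece to the left, the coefficient changes from $\bar A$ to $A$ and $w$ equivalently satisfies
\[
\partial_\alpha\bigl(A_{ij}^{\alpha\beta}\partial_\beta w^j\bigr) = \partial_\alpha f_i^\alpha + \partial_\alpha\bigl((\bar A-A)_{ij}^{\alpha\beta}\partial_\beta h^j\bigr).
\]
The right-hand side now only feels $\nabla h$, which is the key feature because $\nabla h$ is much better behaved on the interior.

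Second, from the standard interior estimate for the constant-coefficient homogeneous system satisfied by $h$ on $B_{2R}$,
\[
\|\nabla h\|_{L^\infty(B_{7R/4})} \leq CR^{-n/2}\|\nabla h\|_{L^2(B_{2R})} \leq CR^{-n/2}\|\nabla u\|_{L^2(B_{2R})},
\]
the last step following from the fact that $h$ is the Dirichlet minimizer of $\int \bar A\nabla\cdot\nabla$ with boundary values $u|_{\partial B_{2R}}$, giving $\lambda\|\nabla h\|_{L^2(B_{2R})}^2 \le \Lambda\|\nabla u\|_{L^2(B_{2R})}^2$. With a cutoff $\eta$ equal to $1$ on $B_{3R/2}$, supported in $B_{7R/4}$, $|\nabla\eta|\leq C/R$, I test the $A$-equation above against $\eta^2 w$, use the ellipticity condition \eqref{Eq:CoefEll} (applied to $\eta w \in H_0^1(B_{2R})$), and invoke Young's inequality to derive a localized Caccioppoli-type inequality of the form
\[
\int_{B_{3R/2}}|\nabla w|^2 \leq C\|f\|_{L^2(B_{7R/4})}^2 + C\!\int_{B_{7R/4}}\!|(\bar A-A)\nabla h|^2 + CR^{-2}\!\int_{B_{7R/4}}|w|^2.
\]
The middle term is precisely the one I can exploit: the pointwise bound on $\nabla h$ gives
\[
\int_{B_{7R/4}}|(\bar A-A)\nabla h|^2 \leq \|\nabla h\|_{L^\infty(B_{7R/4})}^2 \|A-\bar A\|_{L^2(B_{2R})}^2 \leq CR^{-n}\|\nabla u\|_{L^2(B_{2R})}^2 \|A-\bar A\|_{L^2(B_{2R})}^2,
\]
which is already in the desired form.

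The main obstacle is the low-order contribution $R^{-2}\|w\|_{L^2(B_{7R/4})}^2$: a bare application of Poincar\'e (valid since $w|_{\partial B_{2R}}=0$) reintroduces $\|\nabla w\|_{L^2(B_{2R})}^2$, which is only controlled crudely by $\|f\|_{L^2}^2 + \|\nabla u\|_{L^2}^2$ and would produce an unwanted additive $\|\nabla u\|_{L^2}$ on the right-hand side. To bypass this, I would iterate the cutoff estimate on a one-parameter family of shells $\{B_r\}_{3R/2 \le r \le 7R/4}$ in the manner of the Campanato--Giaquinta iteration lemma, combined with the hole-filling trick, so that the $\|\nabla w\|_{L^2}$-terms arising from Poincar\'e are absorbed on the left with a factor strictly smaller than one, while the good contribution $CR^{-n}\|A-\bar A\|_{L^2(B_{2R})}^2\|\nabla u\|_{L^2(B_{2R})}^2$ is preserved. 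Taking square roots yields the stated estimate.
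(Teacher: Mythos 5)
Your setup coincides with the paper's: you rewrite $w=u-h$ as a solution of the $A$-system with source $\partial_\alpha\bigl(f_i^\alpha+(\bar A-A)_{ij}^{\alpha\beta}\partial_\beta h^j\bigr)$, use the interior $L^\infty$ bound for $\nabla h$ together with the energy comparison $\|\nabla h\|_{L^2(B_{2R})}\le C\|\nabla u\|_{L^2(B_{2R})}$, and arrive at a Caccioppoli inequality whose only problematic term is the lower-order one. You correctly identify that term as the obstacle, but the proposed fix does not close the argument. The difficulty is that $w$ satisfies no Poincar\'e inequality on any interior ball $B_s$ or annulus $B_s\setminus B_r$ with $s\le 7R/4$ (it neither vanishes on the boundary of such a set nor has zero mean there), so the only Poincar\'e inequality available is the global one on $B_{2R}$, which produces $\|\nabla w\|_{L^2(B_{2R})}^2$ over the \emph{full} ball. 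That quantity lies outside the range of any shell family contained in $B_{7R/4}$, so neither the Campanato--Giaquinta iteration lemma nor hole-filling can absorb it: hole-filling at best yields $\int_{B_{3R/2}}|\nabla w|^2\le CG+\theta\int_{B_{2R}}|\nabla w|^2$ with a fixed $\theta<1$ and the fixed outer radius $2R$, a single inequality with nowhere further to iterate. Moreover $\int_{B_{2R}}|\nabla w|^2$ genuinely contains a contribution of size $\|(A-\bar A)\nabla u\|_{L^2(B_{2R})}^2$, which for $\nabla u$ merely in $L^2$ cannot be bounded by $R^{-n}\|A-\bar A\|_{L^2}^2\|\nabla u\|_{L^2}^2$ --- this is exactly the loss your scheme was designed to avoid, and it resurfaces through the global energy estimate for $w$.

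The missing ingredient is a bound on $w$ itself (not on $\nabla w$) over $B_{2R}$ that carries the factor $\|A-\bar A\|_{L^2(B_{2R})}$. The paper obtains it by duality: it solves the auxiliary problem $\partial_\beta(\bar A_{ij}^{\alpha\beta}\partial_\alpha\phi^i)=w^j/\sqrt{|w|^2+t^2}$ with $\phi=0$ on $\partial B_{2R}$, notes that $\|\nabla\phi\|_{L^\infty}\le CR$ because $\bar A$ is constant and the datum is bounded by $1$, and pairs this with the $\bar A$-form of the equation for $w$, whose source involves $(\bar A-A)\nabla u$. The term $\int(\bar A-A)_{ij}^{\alpha\beta}\partial_\beta u^j\,\partial_\alpha\phi^i$ is then estimated by $\|\nabla\phi\|_{L^\infty}\|A-\bar A\|_{L^2}\|\nabla u\|_{L^2}$, which yields $\|w\|_{L^1(B_{2R})}\le CR^{(n+2)/2}\bigl[\|f\|_{L^2(B_{2R})}+R^{-n/2}\|A-\bar A\|_{L^2(B_{2R})}\|\nabla u\|_{L^2(B_{2R})}\bigr]$; this $L^1$ bound is then inserted into an interior estimate with an $L^1$ lower-order term. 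You would need to supply this duality step (or an equivalent mechanism producing the factor $\|A-\bar A\|_{L^2}$ in the lower-order term) for your proof to go through.
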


\begin{proof} In the proof, $C$ denotes a generic positive constant which depends only on $n$, $N$, $\Lambda$ and $\lambda$. Using that $\bar A$ is constant, we have by standard elliptic estimates that
\[
\|\nabla h\|_{L^\infty(B_{7R/4})} \leq CR^{-n/2} \|\nabla h\|_{L^2(B_{2R})} \leq CR^{-n/2}\|\nabla u\|_{L^2(B_{2R})}.
\]
Observing that
\[
\partial_\alpha(A{}_{ij}^{\alpha\beta} \partial_\beta (u - h)^j) 
	= \partial_\alpha (f_i^\alpha + (\bar A - A)_{ij}^{\alpha\beta} \partial_\beta h^j) \quad \text{ in } B_{2R}, \quad i = 1, \ldots, N,
\]
we deduce that
\begin{align}
\|\nabla (u - h)\|_{L^2(B_{3R/2})} 
	&\leq C\Big[\|f\|_{L^2(B_{7R/4})} + \|A - \bar A\|_{L^2(B_{7R/4})}\|\nabla h\|_{L^\infty(B_{7R/4})}\nonumber\\
		&\qquad  + R^{-(n+2)/2}\|u - h\|_{L^1(B_{7R/4})}\Big]\nonumber\\
	&\leq C\Big[\|f\|_{L^2(B_{2R})} + R^{-n/2} \|A - \bar A\|_{L^2(B_{2R})}\|\nabla u\|_{L^2(B_{2R})} \nonumber\\
		&\qquad + R^{-(n+2)/2} \|u - h\|_{L^1(B_{2R})}\Big].
		\label{Eq:19IV22-E1}
\end{align}

To estimate $\|u - h\|_{L^1(B_{2R})}$, fix some $t > 0$ and consider an auxiliary equation
\begin{align*}
\partial_\beta(\bar A_{ij}^{\alpha\beta} \partial_\alpha \phi^i) 
	&= \frac{(u-h)^j}{\sqrt{|u-h|^2 + t^2}} \text{ in } B_{2R}, \quad j = 1, \ldots, N,\\
\phi
	& = 0 \quad  \text{ on } \partial B_{2R}.
\end{align*}
Testing the above against $u - h$, we obtain
\begin{equation}
\int_{B_{2R}} \frac{|u-h|^2}{\sqrt{|u-h|^2 + t^2}}\,dx = \int_{B_{2R}} \bar A_{ij}^{\alpha\beta} \partial_\alpha \phi^i \partial_\beta(u - h)^j\,dx.
	\label{Eq:19IV22-E2}
\end{equation}
As $u - h$ satisfies
\[
\partial_\alpha(\bar A{}_{ij}^{\alpha\beta} \partial_\beta (u - h)^j) 
	= \partial_\alpha (f_i^\alpha + (\bar A - A)_{ij}^{\alpha\beta} \partial_\beta u^j) \quad \text{ in } B_{2R}, \quad i = 1, \ldots, N,
\]
we have
\begin{equation}
\int_{B_{2R}} \bar A_{ij}^{\alpha\beta} \partial_\beta(u - h)^j\partial_\alpha \phi^i \,dx
	= \int_{B_{2R}} (f_i^\alpha + (\bar A - A)_{ij}^{\alpha\beta} \partial_\beta u^j)\partial_\alpha \phi^i \,dx.
	\label{Eq:19IV22-E2X}
\end{equation}
Inserting \eqref{Eq:19IV22-E2X} into \eqref{Eq:19IV22-E2} and noting that $\|\nabla\phi\|_{L^\infty(B_2)} \leq CR$ (as $|\partial_\beta(\bar A_{ij}^{\alpha\beta} \partial_\alpha \phi^i) | \leq 1$), we arrive at
\[
\int_{B_{2R}} \frac{|u-h|^2}{\sqrt{|u-h|^2 + t^2}}\,dx 
	\leq C\Big[R^{(n+2)/2} \|f\|_{L^2(B_{2R})} + R \|A - \bar A\|_{L^2(B_{2R})}\|\nabla u\|_{L^2(B_{2R})} \Big].
\]
Noting that the constant $C$ is independent of $t$, we may send $t \rightarrow 0$ to obtain
\begin{equation}
\|u-h\|_{L^1(B_{2R})}
	\leq CR^{(n+2)/2} \Big[\|f\|_{L^2(B_{2R})} + R^{-n/2}\|A - \bar A\|_{L^2(B_{2R})}\|\nabla u\|_{L^2(B_{2R})} \Big].
	\label{Eq:19IV22-E3}
\end{equation}
The conclusion follows from \eqref{Eq:19IV22-E1} and \eqref{Eq:19IV22-E3}.
\end{proof}

\begin{proof}[Proof of Proposition \ref{Prop:MO}]
We only need to give the proof for a fixed $R$, say $R = 2$. Our proof is inspired by that of \cite{Li17-CAM}.

In the proof, $C$ denotes a generic positive constant which depends only on $n$, $N$, $\Lambda$ and $\lambda$. In particular it is independent of the parameter $k$ which will appear below. Also, we will simply write $\omega$ instead of $\omega_{A,2}$.

\medskip
\noindent\underline{Proof of \eqref{Eq:Est1}:} For $k \geq 0$, let $R_k = 4^{-k}$, $\bar A_k = (A)_{B_{2R_k}}$ and $h_k \in H^1(B_{2R_k})$ be the solution to
\begin{align*}
\partial_\alpha((\bar A_k)_{ij}^{\alpha\beta} \partial_\beta h_{k}{}^j) 
	&= 0\quad \text{ in } B_{2R_k}, \quad i = 1, \ldots, N,\\
h_k
	&= u \quad \text{ on } \partial B_{2R_k}.
\end{align*}
Let $a_k = R_k^{-n/2}\|\nabla (u - h_k)\|_{L^2(B_{R_k})}$ and $b_k = \|\nabla h_k\|_{L^\infty(B_{R_k})}$. 

Note that, by triangle inequality, we have
\begin{equation}
\|\nabla u\|_{L^2(B_{R_k})} \leq R_k^{n/2}(a_k + b_k).
	\label{Eq:08IV22-X0}
\end{equation}

By elliptic estimates for $h_k$, we have
\begin{align}
\|\nabla h_k\|_{L^2(B_{2R_k})}
	& \leq C \|\nabla u\|_{L^2(B_{2R_k})},
	\label{Eq:20IV22-H1}\\
\|\nabla h_k\|_{L^\infty(B_{3R_k/2})} 
	&\leq C R_k^{-n/2}\|\nabla u\|_{L^2(B_{2R_k})},
	\label{Eq:20IV22-H2}\\
\|\nabla^2 h_k\|_{L^\infty(B_{3R_k/2})} + R_k \|\nabla^3 h_k\|_{L^\infty(B_{3R_k/2})}
	& \leq C R_k^{-(n+2)/2}\|\nabla u\|_{L^2(B_{2R_k})}.
	\label{Eq:20IV22-H3}
\end{align}

By Lemma \ref{Lem:HRep},
\begin{equation}
\|\nabla (u - h_k)\|_{L^2(B_{3R_k/2})} \leq C\omega(2R_k) \|\nabla u\|_{L^2(B_{2R_k})}.
	\label{Eq:08IV22-X1}
\end{equation}
By \eqref{Eq:08IV22-X1} and \eqref{Eq:20IV22-H1}, 
\[
R_k^{n/2}(a_k + b_k) \leq C\|\nabla u\|_{L^2(B_{2R_k})}.
\]

By \eqref{Eq:08IV22-X0} and \eqref{Eq:08IV22-X1}, we have 
\begin{align*}
\|\nabla (u - h_{k+1})\|_{L^2(B_{R_{k+1}})} 
	&\leq C\omega(2R_{k}) \|\nabla u\|_{L^2(B_{R_k})}  \nonumber	\\
	&\leq C\omega(2R_{k}) R_{k}^{n/2}(a_k + b_k).
\end{align*}
Hence
\begin{equation}
a_{k+1} \leq C\omega(2R_{k}) (a_k + b_k).
	\label{Eq:08IV22-X2}
\end{equation}

Next, we have by \eqref{Eq:08IV22-X1} that
\begin{align*}
\|\nabla (h_{k+1} - h_k)\|_{L^2(B_{3R_{k+1}/2})} 
	&\leq \|\nabla (u - h_{k+1})\|_{L^2(B_{3R_{k+1}/2})} + \|\nabla (u - h_k)\|_{L^2(B_{3R_{k+1}/2})} \\
	&\leq C\omega(2R_k) \|\nabla u\|_{L^2(B_{R_k})} \\
	&\leq C\omega(2R_{k}) R_{k}^{n/2}(a_k + b_k).
\end{align*}
Noting that $h_{k+1} - h_k$ satisfies
\begin{align*}
\partial_\alpha((\bar A_{k})_{ij}^{\alpha\beta} \partial_\beta (h_{k+1} - h_k)^j) 
	&= \partial_\alpha((\bar A_k - \bar A_{k+1})_{ij}^{\alpha\beta} \partial_\beta  h_{k+1}{}^j) \quad \text{ in } B_{2R_{k+1}}, \quad i = 1, \ldots, N,
\end{align*}
we thus have by elliptic estimates and \eqref{Eq:20IV22-H2} and \eqref{Eq:20IV22-H3} (applied to $h_{k+1}$) that
\begin{align}
\|\nabla (h_{k+1} - h_k)\|_{L^\infty(B_{R_{k+1}})} 
	&\leq C\omega(2R_{k}) (a_k + b_k),
	\label{Eq:08IV22-X3}\\
 R_{k+1}\|\nabla^2 (h_{k+1} - h_k)\|_{L^\infty(B_{R_{k+1}})}
	&\leq C\omega(2R_{k}) (a_k + b_k).
	\label{Eq:08IV22-X4}
\end{align}

By \eqref{Eq:08IV22-X3},
\begin{equation}
b_{k+1} \leq b_k + C\omega(2R_{k}) (a_k + b_k).
	\label{Eq:08IV22-X5}
\end{equation}
By \eqref{Eq:08IV22-X2} and \eqref{Eq:08IV22-X5}, we have
\[
a_{k+1} + b_{k+1} \leq (1 + C\omega(2R_{k})) (a_k + b_k).
\]
We deduce that
\begin{align}
a_k + b_k 
	&\leq \prod_{j=0}^k (1 + C\omega(2R_{j})) (a_0 + b_0) \leq C\exp\Big( C \sum_{j=0}^k\omega(2R_{j})\Big) \|\nabla u\|_{L^2(B_2)}\nonumber\\
	&\leq C\exp\Big( C \int_{2R_k}^2 \frac{\omega(t)}{t}\,dt\Big) \|\nabla u\|_{L^2(B_2)},
		\label{Eq:08IV22-X6}
\end{align}
where we have used the fact that $\omega(t) \leq C \omega(s)$ whenever $0 < t \leq s \leq 4t$. We have thus shown that 
\[
\int_{B_{R_k}} |\nabla u|^2\,dx \leq C R_k^n \exp\Big(C \int_{2R_k}^2 \frac{\omega(t)}{t}\,dt\Big) \int_{B_R} |\nabla u|^2\,dx \text{ for } k \geq 0.
\]
Estimate \eqref{Eq:Est1} is readily seen.

\medskip
\noindent\underline{Proof of \eqref{Eq:Est2}:} We write
\[
h_{R_{k}} = \sum_{j=0}^{k} w_j \text{ where } w_0 = h_{R_0} \text{ and } w_j = h_{R_{j}} - h_{R_{j-1}} \text{ for } j \geq 1.
\]
Using the estimate $\|\nabla^2 h_{R_0}\|_{L^\infty(B_1)} \leq C\|\nabla u\|_{L^2(B_2)}$ together with \eqref{Eq:08IV22-X4} and \eqref{Eq:08IV22-X6}, we have
\begin{align}
|\nabla h_{R_{k}}(x) - \nabla h_{R_{k}}(0)|
	&\leq C |x|\sum_{j=0}^{k} \frac{ \omega(2R_j) }{R_j} \exp\Big(C \int_{2R_j}^2 \frac{\omega(t)}{t}\,dt\Big) \|\nabla u\|_{L^2(B_2)}\nonumber\\
	&\leq C |x| \int_{2R_k}^2 \frac{ \omega(t)}{t^2}\, \exp\Big(C \int_{t}^2 \frac{\omega(s)}{s}\,ds\Big)\,dt \|\nabla u\|_{L^2(B_2)},
	\label{Eq:08IV22-X7P}
\end{align}
where we have again used the fact that $\omega(t) \leq C \omega(s)$ whenever $0 < t \leq s \leq 4t$. This implies
\begin{multline}
\|\nabla h_{R_{k}} - \nabla h_{R_{k}}(0)\|_{L^2(B_{R_k})}\\
	 \leq C R_k^{(n+2)/2}  \int_{2R_k}^2 \frac{ \omega(t)}{t^2}\, \exp\Big(C \int_{t}^2 \frac{\omega(s)}{s}\,ds\Big)\,dt  \|\nabla u\|_{L^2(B_2)}.
	\label{Eq:08IV22-X7}
\end{multline}
Combining \eqref{Eq:08IV22-X7} with \eqref{Eq:08IV22-X1} and \eqref{Eq:08IV22-X6}, we get
\begin{align}
\|\nabla u - (\nabla u)_{R_{k}}\|_{L^2(B_{R_k})}
	&\leq \|\nabla u - \nabla h_{R_{k}}(0)\|_{L^2(B_{R_k})}\nonumber\\
	&\leq \|\nabla (u - \nabla h_{R_{k}})\|_{L^2(B_{R_k})}
		+ \|\nabla u - \nabla h_{R_{k}}(0)\|_{L^2(B_{R_k})}\nonumber\\
	&\leq C R_k^{(n+2)/2} \int_{2R_k}^2 \frac{ \omega(t)}{t^2}\, \exp\Big(C \int_{t}^2 \frac{\omega(s)}{s}\,ds\Big)\,dt  \|\nabla u\|_{L^2(B_2)}\nonumber\\
		&\quad + C R_k^{n/2}  \omega(2R_k) \exp\Big(C \int_{2R_k}^2 \frac{\omega(t)}{t}\,dt\Big)  \|\nabla u\|_{L^2(B_2)}.
		\label{Eq:08IV22-X8}
\end{align}
As $\omega(2R_k) \leq C \omega(t)$ whenever $2R_k \leq t \leq 4R_k$, we have
\[
 \int_{2R_k}^{4R_k} \frac{ \omega(t)}{t^2}\, \exp\Big(C \int_{t}^2 \frac{\omega(s)}{s}\,ds\Big)\,dt 
 	 \geq \frac{\omega(2R_k)}{CR_k} \exp\Big(C \int_{2R_k}^2 \frac{\omega(s)}{s}\,ds\Big)
\]
Using this in \eqref{Eq:08IV22-X8}, we deduce that for $k \geq 1$ that
\begin{align*}
\|\nabla u - (\nabla u)_{R_{k}}\|_{L^2(B_{R_k})}
	&\leq C R_k^{(n+2)/2} \int_{2R_k}^2 \frac{ \omega(t)}{t^2}\, \exp\Big(C \int_{t}^2 \frac{\omega(s)}{s}\,ds\Big)\,dt  \|\nabla u\|_{L^2(B_2)}.
\end{align*}
Estimate \eqref{Eq:Est2} follows.

\medskip
\noindent\underline{Proof of \eqref{Eq:Est3}:} We adjust the proof of \eqref{Eq:Est2} exploiting the fact that $\nabla u \in L^\infty(B_2)$. First, using the fact that $a_k + b_k \leq CR_k^{n/2} \|\nabla u\|_{L^\infty(B_2)}$ in \eqref{Eq:08IV22-X4} we get instead of \eqref{Eq:08IV22-X7P} the stronger estimate
\begin{align}
|\nabla h_{R_{k}}(x) - \nabla h_{R_{k}}(0)|
	&\leq C |x| \int_{2R_k}^2 \frac{ \omega(t)}{t^2}\, dt \|\nabla u\|_{L^\infty(B_2)},
	\label{Eq:08IV22-X7PN}
\end{align}
and so
\begin{equation}
\|\nabla h_{R_{k}} - \nabla h_{R_{k}}(0)\|_{L^2(B_{R_k})}\\
	 \leq C R_k^{(n+2)/2}  \int_{2R_k}^2 \frac{ \omega(t)}{t^2}\, dt  \|\nabla u\|_{L^\infty(B_2)}.
	\label{Eq:08IV22-X7N}
\end{equation}
Combining \eqref{Eq:08IV22-X7N} with \eqref{Eq:08IV22-X1}, we get for $k \geq 1$ that
\begin{align}
\|\nabla u - (\nabla u)_{R_{k}}\|_{L^2(B_{R_k})}
	&\leq \|\nabla u - \nabla h_{R_{k}}(0)\|_{L^2(B_{R_k})}\nonumber\\
	&\leq \|\nabla (u - \nabla h_{R_{k}})\|_{L^2(B_{R_k})}
		+ \|\nabla u - \nabla h_{R_{k}}(0)\|_{L^2(B_{R_k})}\nonumber\\
	&\leq C R_k^{(n+2)/2} \int_{2R_k}^2 \frac{ \omega(t)}{t^2}\,dt  \|\nabla u\|_{L^\infty(B_2)}\nonumber\\
		&\quad + C R_k^{n/2}  \omega(2R_k)  \|\nabla u\|_{L^\infty(B_2)}\nonumber\\
	&\leq C R_k^{(n+2)/2} \int_{2R_k}^2 \frac{ \omega(t)}{t^2}\,dt  \|\nabla u\|_{L^\infty(B_2)}.
		\label{Eq:08IV22-X8N}
\end{align}
Estimate \eqref{Eq:Est3} follows.
\end{proof}

\begin{rem}
If the Dini condition \eqref{Eq:Dini} or \eqref{Eq:DiniL2} holds, it can be seen from \eqref{Eq:08IV22-X3} that $\{\nabla h_k(0)\}$ converges to some $P \in \RR^{N \times n}$, from which it follows that
\[
\lim_{r\rightarrow 0} r^{-n/2}\|\nabla u - P \|_{L^2(B_{r})} = 0,
\]
yielding the continuity of $\nabla u$ at the origin. We have thus recovered the results on the differentiability of $H^1$ solutions of Brezis \cite{BrezisinAncona09, Brezis08} and Li \cite{Li17-CAM}.
\end{rem}

\begin{proof}[Proof of Proposition \ref{Prop:c1}]
We take $N = 1$ and drop the indices $i$, $j$ in the expression of $A$ (so that $A= (A^{\alpha\beta})$). Following \cite[Lemma 2.1]{JinMvSchaftingen09}, we make the ansatz that
\begin{align*}
A^{\alpha\beta}(x)
	&= \delta^{\alpha\beta} + a(|x|) \Big(\delta^{\alpha\beta} - \frac{x^\alpha x^\beta}{|x|^2}\Big),\\
u(x)
	&= x^1 v(|x|).
\end{align*}
Then
\begin{align*}
\partial_\alpha (A^{\alpha\beta}\partial_\beta u) = x^1 \Big(v''(|x|) + \frac{n+1}{|x|} v'(|x|) - \frac{n-1}{|x|^2} a(|x|) v(|x|)\Big).
\end{align*}
Selecting now
\begin{align*}
a(r) 
	&= - \frac{1 + n \ln \frac{64}{r}}{(n-1) (\ln \frac{64}{r})^2 \ln \ln \frac{64}{r}},\\
v(r)
	&= \ln \ln \frac{64}{r},
\end{align*}
we see that $A$ is continuous in $\bar B_4$, satisfies \eqref{Eq:CoefB}, \eqref{Eq:CoefEll} and $u$ is an $H^1$ solution of \eqref{Eq:u}. The matrix $A$ admits a modulus of continuity $\bar\omega_A(t) \sim \frac{1}{\ln \frac{64}{t} \ln \ln \frac{64}{t}}$ as $t \rightarrow 0$ and so \eqref{Eq:DiniY} holds. It is readily seen that $u \in W^{1,p}(B_4)$ for all $p \in [1,\infty)$, $\nabla u \in VMO(B_4)$ but $\nabla u \notin L^\infty_{\rm loc}(B_2)$.
\end{proof}

\begin{proof}[Proof of Proposition \ref{Prop:c2}]
Instead of the choice in the proof of Proposition \ref{Prop:c1}, we now choose
\begin{align*}
a(r) 
	&= - \frac{\sin \ln \ln \ln \frac{64}{r} + \cos \ln \ln \ln \frac{64}{r}(1+ \ln \frac{64}{r} + n \ln \frac{64}{r} \ln \ln \frac{64}{r}) }{(n-1) (\ln \frac{64}{r})^2 (\ln \ln \frac{64}{r})^2 (2 + \sin\ln \ln \frac{64}{r})},\\
v(r)
	&= 2 + \sin\ln \ln \ln \frac{64}{r}.
\end{align*}
It is readily checked that $A$ is continuous in $\bar B_4$, satisfies \eqref{Eq:CoefB}, \eqref{Eq:CoefEll}, \eqref{Eq:DiniY} and $u$ is an $H^1$ solution of \eqref{Eq:u}, $\nabla u  \in L^\infty(B_4) \cap VMO(B_4)$ but $\nabla u \notin C(B_2)$.
\end{proof}

Finally, we briefly touch on the validity of Theorem \ref{Thm:1} when $\omega_{A,2}$ is replaced by $\omega_{A,p}$ for $1 < p < 2$. For this, we only need the following $L^p$ version of Proposition \ref{Prop:MO}.

\begin{prop}\label{Prop:MOp}
Let $A= (A_{ij}^{\alpha\beta})$ satisfy \eqref{Eq:CoefB} and \eqref{Eq:CoefEll}. Let $1 < p < 2$. Then there exist constants $\gamma  >0$ and $C_* > 0$ depending only on $n$, $N$, $p$, $\Lambda$ and $\lambda$ such that, provided $[A]_{BMO}(B_4) < \gamma$, there hold for every $u \in W^{1,p}(B_4)$ satisfying \eqref{Eq:u} and for $0 < r \leq R/4 \leq 1/2$ that
\begin{equation}
\int_{B_r} |\nabla u|^{p}\,dx \leq \frac{C_*r^n}{R^n} \exp\Big(2C_* \int_{2r}^R \frac{\omega_{A,p}(t)}{t}\,dt\Big) \int_{B_R} |\nabla u|^{p}\,dx,
	\label{Eq:Est1p}
\end{equation}
and
\begin{multline}
\int_{B_r} |\nabla u - (\nabla u)_r|^{p}\,dx \leq \frac{C_*r^{n+2}}{R^n} \int_{B_R} |\nabla u|^{p}\,dx \times\\
	\times \Big\{\int_{2r}^R \frac{\omega_{A,p}(t)}{t^2} \exp\Big(C_* \int_{t}^R \frac{\omega_{A,p}(s)}{s}\,ds\Big)\,dt\Big\}^2,
	\label{Eq:Est2p}
\end{multline}
where $(\nabla u)_r = \frac{1}{|B_r|} \int_{B_r} \nabla u\,dx$ for $0 < r \leq 2$.
 
Moreover, if $u \in W^{1,\infty}(B_4)$, then, for $0 < r \leq R/4 \leq 1/2$,
\begin{equation}
\int_{B_r} |\nabla u - (\nabla u)_r|^{p}\,dx \leq \frac{C_*r^{n+2}}{R^n}\Big\{\int_{2r}^R \frac{\omega_{A,p}(t)}{t^2}\,dt\Big\}^2\,\sup_{B_R} |\nabla u|^{p} .
	\label{Eq:Est3p}
\end{equation}
\end{prop}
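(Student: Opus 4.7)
The plan is to follow the argument of Proposition~\ref{Prop:MO} line by line, replacing each $L^2$ ingredient with an $L^p$ counterpart. The smallness hypothesis $[A]_{BMO}(B_4) < \gamma$ will be used at exactly one step: the $W^{1,p}$ Calder\'on--Zygmund estimate for divergence-form systems with variable coefficient $A$, which is only available unconditionally for $p = 2$.

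First I would establish the following $L^p$ analog of Lemma~\ref{Lem:HRep}: under the same hypotheses but with $u, h \in W^{1,p}(B_{2R})$ and $\bar A$ constant,
\[
\|\nabla(u-h)\|_{L^p(B_{3R/2})} \leq C\Bigl[\|f\|_{L^p(B_{2R})} + R^{-n/p}\|A - \bar A\|_{L^p(B_{2R})}\|\nabla u\|_{L^p(B_{2R})}\Bigr].
\]
The constant-coefficient Dirichlet problem with datum $u|_{\partial B_{2R}}$ is $W^{1,p}$-solvable with the bound $\|\nabla h\|_{L^p(B_{2R})} \leq C\|\nabla u\|_{L^p(B_{2R})}$ and interior smoothing $\|\nabla h\|_{L^\infty(B_{7R/4})} \leq CR^{-n/p}\|\nabla u\|_{L^p(B_{2R})}$. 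The difference $u - h$ satisfies a divergence-form equation with coefficient $A$ (not $\bar A$) and right-hand side $\partial_\alpha(f_i^\alpha + (\bar A - A)_{ij}^{\alpha\beta}\partial_\beta h^j)$. Applying the small-BMO $W^{1,p}$ estimate (e.g.\ Byun--Wang) gives the interior bound in terms of the right-hand side plus the lower-order term $R^{-1}\|u-h\|_{L^p(B_{2R})}$, and H\"older's inequality yields
\[
\|(A-\bar A)\nabla h\|_{L^p(B_{7R/4})} \leq \|\nabla h\|_{L^\infty(B_{7R/4})}\|A-\bar A\|_{L^p(B_{2R})} \leq C\,\omega_{A,p}(2R)\,\|\nabla u\|_{L^p(B_{2R})}.
\]
The lower-order term $R^{-1}\|u-h\|_{L^p(B_{2R})}$ is absorbed by a duality argument mirroring the $L^1$ estimate in Lemma~\ref{Lem:HRep}: one solves the auxiliary $\bar A$-equation with right-hand side a regularization of $|u-h|^{p-2}(u-h)/\|u-h\|_{L^{p}}^{p-1}$, tests against $u - h$, and uses that the test function satisfies $\|\nabla\phi\|_{L^{p'}} \leq CR\|(\text{RHS})\|_{L^{p'}} \leq CR$ by constant-coefficient $L^{p'}$ theory.

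With this lemma available, the iteration is essentially identical. Fix $R = 2$, set $R_k = 4^{-k}$, $\bar A_k = (A)_{B_{2R_k}}$, let $h_k \in W^{1,p}(B_{2R_k})$ be the constant-coefficient replacement of $u$, and define $a_k = R_k^{-n/p}\|\nabla(u - h_k)\|_{L^p(B_{R_k})}$ and $b_k = \|\nabla h_k\|_{L^\infty(B_{R_k})}$. The $L^p$ version of Lemma~\ref{Lem:HRep} gives $a_{k+1} \leq C\omega_{A,p}(2R_k)(a_k + b_k)$, while interior $C^2$ estimates for the constant-coefficient equation satisfied by $h_{k+1} - h_k$ (with source $\partial_\alpha((\bar A_k - \bar A_{k+1})_{ij}^{\alpha\beta}\partial_\beta h_{k+1}^j)$) yield, just as in \eqref{Eq:08IV22-X3}--\eqref{Eq:08IV22-X4},
\[
\|\nabla(h_{k+1} - h_k)\|_{L^\infty(B_{R_{k+1}})} + R_{k+1}\|\nabla^2(h_{k+1} - h_k)\|_{L^\infty(B_{R_{k+1}})} \leq C\omega_{A,p}(2R_k)(a_k + b_k).
\]
Hence $b_{k+1} \leq b_k + C\omega_{A,p}(2R_k)(a_k + b_k)$, and the product expansion $\prod_{j=0}^k(1 + C\omega_{A,p}(2R_j))$ together with the comparability $\omega_{A,p}(t) \simeq \omega_{A,p}(s)$ for $t \leq s \leq 4t$ gives \eqref{Eq:Est1p}. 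Telescoping $\nabla h_k(x) - \nabla h_k(0)$ via the $\nabla^2$ bound reproduces \eqref{Eq:08IV22-X7P}--\eqref{Eq:08IV22-X8} with $L^2$ replaced by $L^p$ and yields \eqref{Eq:Est2p}; the substitution $a_k + b_k \leq CR_k^{n/p}\|\nabla u\|_{L^\infty(B_2)}$ gives \eqref{Eq:Est3p}.

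The main obstacle is the $W^{1,p}$ Calder\'on--Zygmund step for the equation satisfied by $u - h$: with variable coefficient $A$ and $p \neq 2$, such an estimate is unavailable without some structural hypothesis on $A$, and small-BMO is the standard sufficient condition. This is the sole source of the smallness assumption $[A]_{BMO}(B_4) < \gamma$. For $p$ sufficiently close to $2$, Gehring's lemma applied to the Caccioppoli inequality provides a self-improvement of $L^2$ integrability into some $L^{2+\varepsilon}$, and interpolation then removes the smallness hypothesis, consistent with the remark preceding the proposition.
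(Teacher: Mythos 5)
Your overall architecture is the paper's: an $L^p$ harmonic-replacement lemma fed into the same $R_k = 4^{-k}$ iteration, with the small-BMO hypothesis invoked for Calder\'on--Zygmund theory. The iteration part of your argument is fine. But there is a genuine gap in your replacement lemma, precisely in the duality step, and it causes you to misidentify where (and how often) the smallness of $[A]_{BMO}$ is needed.

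In the duality estimate for $\|u-h\|_{L^1(B_{2R})}$ one must bound the cross term $\int_{B_{2R}} (\bar A - A)_{ij}^{\alpha\beta}\partial_\beta u^j\,\partial_\alpha\phi^i\,dx$ (note: it is $\nabla u$, not $\nabla h$, that appears here, because the equation for $u-h$ is rewritten with principal part $\bar A$ so that the auxiliary problem is constant-coefficient). Since the only smallness available for $A-\bar A$ is in $L^p$ (this is what produces the factor $\omega_{A,p}(2R)$), a three-factor H\"older split $\frac{1}{q_1}+\frac{1}{q_2}+\frac{1}{q_3}=1$ with $q_1=p$ forces $\frac{1}{q_2}+\frac{1}{q_3}=\frac{1}{p'}$; taking $\nabla\phi\in L^\infty$ (with $\|\nabla\phi\|_{L^\infty}\leq CR$) this requires $\nabla u\in L^{p'}$ with $p'=p/(p-1)>2$, which does \emph{not} follow from the hypothesis $u\in W^{1,p}$. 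Your proposed pairing, with $\|\nabla\phi\|_{L^{p'}}\leq CR$ and the remaining factors in $L^p$, cannot close: if $\nabla u$ is only placed in $L^p$ and $A-\bar A$ in $L^{q_1}$, then $\frac{1}{q_1}=1-\frac{2}{p}<0$ for $p<2$, so no admissible exponent exists; the only escape is $\|A-\bar A\|_{L^\infty}\leq 2\Lambda$, which loses the small factor entirely. This is why the paper's Lemma \ref{Lem:HRepp} first proves the higher-integrability estimate \eqref{Eq:19IV22-E0p}, i.e.\ $\|\nabla u\|_{L^{p'}(B_{2R})}\leq C[\|f\|_{L^{p'}(B_{3R})}+R^{n(1/p'-1/p)}\|\nabla u\|_{L^p(B_{3R})}]$, using the small-BMO $W^{1,p'}$ theory applied to $u$ itself on a slightly larger ball (hence the three nested radii $3R$, $2R$, $3R/2$ in its statement), and only then runs the duality argument with $\|A-\bar A\|_{L^p}\|\nabla u\|_{L^{p'}}\|\nabla\phi\|_{L^\infty}$, as in \eqref{Eq:19IV22-E3p}. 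So the smallness hypothesis enters twice --- once for the $W^{1,p'}$ estimate on $u$ (the step you are missing, and arguably the essential one since $p'>2$) and once for the $W^{1,p}$ interior estimate on $u-h$ --- not ``at exactly one step'' as you claim. Your concluding remark about $p$ close to $2$ is correct in spirit (Meyers/Gehring self-improvement supplies the needed $L^{2+\varepsilon}$ integrability of $\nabla u$ and removes the smallness hypothesis there), which is consistent with the paper's footnote.
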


The proof of Proposition \ref{Prop:MOp} is the same as that of Proposition \ref{Prop:MO}, but now using the following harmonic replacement estimate:

\begin{lem}\label{Lem:HRepp}
Let $1 < p < 2$. Let $A, \bar A$ satisfy \eqref{Eq:CoefB} and \eqref{Eq:CoefEll} with $\bar A$ being constant in $B_4$ and $f = (f_i^\alpha) \in L^{p'}(B_4)$. Let $R \in (0,1)$ and suppose $u, h \in W^{1,p}(B_{4R})$ satisfy
\begin{align*}
\partial_\alpha(A{}_{ij}^{\alpha\beta} \partial_\beta u^j) 
	&= \partial_\alpha f_i^\alpha\quad \text{ in } B_{3R}, \quad i = 1, \ldots, N,\\
\partial_\alpha(\bar A{}_{ij}^{\alpha\beta} \partial_\beta h^j) 
	&= 0 \quad \text{ in } B_{2R}, \quad i = 1, \ldots, N,\\
u
	& = h \quad  \text{ on } \partial B_{2R}.
\end{align*}
Then there exist constants $\gamma > 0$ and $C > 0$ depending only on $n, N, p, \Lambda$ and $\lambda$ such that, provided $[A]_{BMO(B_{4R})} \leq \gamma$,
\[
\|\nabla (u-h)\|_{L^{p}(B_{3R/2})} \leq C \Big[R^{n(1/p - 1/p')}\|f\|_{L^{p'}(B_{3R})} + R^{-n/p} \|A - \bar A\|_{L^p(B_{3R})}  \|\nabla u\|_{L^{p}(B_{3R})}\Big].
\]
\end{lem}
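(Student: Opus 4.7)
The plan is to carry over the proof of Lemma~\ref{Lem:HRep} line by line to the $L^p$ setting, replacing each $L^2$ ingredient by its $L^p$ analogue. The essential new input is a $W^{1,p}$ Calder\'on--Zygmund estimate for divergence-form systems with small-BMO leading coefficients, of the type established in the works of Byun, Wang and their collaborators (and extended from scalar equations to systems). It is precisely this input that forces the smallness hypothesis $[A]_{BMO(B_{4R})} \leq \gamma$ and the $p$-dependence of the constants.

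The three main steps are as follows. First, since $\bar A$ is constant, interior elliptic estimates for $h$ yield
\[
\|\nabla h\|_{L^\infty(B_{7R/4})} \leq CR^{-n/p}\|\nabla h\|_{L^p(B_{2R})} \leq CR^{-n/p}\|\nabla u\|_{L^p(B_{2R})},
\]
where the second inequality uses a standard energy comparison between $h$ and its Dirichlet datum. Second, $w := u - h \in W^{1,p}_0(B_{2R})$ satisfies
\[
\partial_\alpha\bigl(A_{ij}^{\alpha\beta}\partial_\beta w^j\bigr) = \partial_\alpha F_i^\alpha, \quad F := f + (\bar A - A)\nabla h, \quad \text{in } B_{2R}.
\]
Applying the small-BMO $W^{1,p}$ estimate to the equation for $\eta w$, with a cutoff $\eta \in C^\infty_c(B_{7R/4})$, $\eta \equiv 1$ on $B_{3R/2}$, localizes the source to a ball where $\nabla h$ is bounded and yields
\[
\|\nabla w\|_{L^p(B_{3R/2})} \leq C\bigl[\|f\|_{L^p(B_{7R/4})} + \|\nabla h\|_{L^\infty(B_{7R/4})}\|A - \bar A\|_{L^p(B_{7R/4})} + R^{-1}\|w\|_{L^p(B_{7R/4})}\bigr].
\]
H\"older's inequality converts $\|f\|_{L^p(B_{7R/4})}$ to $CR^{n(1/p - 1/p')}\|f\|_{L^{p'}}$, and the middle term is exactly $CR^{-n/p}\|A - \bar A\|_{L^p}\|\nabla u\|_{L^p}$. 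Third, the lower-order term $R^{-1}\|w\|_{L^p(B_{7R/4})}$ is controlled by the same duality device as in Lemma~\ref{Lem:HRep}: for a test density $g \in L^{p'}(B_{2R})$ with $\|g\|_{L^{p'}} \leq 1$, solve $\partial_\beta(\bar A_{ij}^{\alpha\beta}\partial_\alpha\phi^i) = g^j$ in $B_{2R}$ with $\phi = 0$ on $\partial B_{2R}$, and use $\|\nabla\phi\|_{L^{p'}(B_{2R})} \leq CR\|g\|_{L^{p'}}$ from constant-coefficient $W^{2,p'}$ regularity together with Poincar\'e. Pairing against $w$ through the alternate form $\partial_\alpha(\bar A\,\partial_\beta w^j) = \partial_\alpha[f + (\bar A - A)\partial_\beta u^j]$, splitting $(\bar A - A)\nabla u = (\bar A - A)\nabla h + (\bar A - A)\nabla w$ to recover the first power of $\|A - \bar A\|_{L^p}$, and taking supremum over $g$, produces an $L^p$ bound on $w$ that matches the two main terms up to the factor $R$; here the BMO smallness of $A$ is used a second time, through John--Nirenberg, to bound the incidental $L^{p'}$ norm of $A - \bar A$ appearing in the pairing.

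The principal obstacle is the first step, the $W^{1,p}$ Calder\'on--Zygmund estimate for systems with BMO coefficients: for $p$ away from $2$ this genuinely requires a smallness assumption on $[A]_{BMO}$, which is the sole source of $\gamma$. For $p$ sufficiently close to $2$ one may bypass this restriction by interpolating the unconditional $L^2$ estimate of Lemma~\ref{Lem:HRep} against Gehring's higher-integrability lemma, yielding the range of $p$ for which the smallness assumption can be dropped, as alluded to in the remark following Proposition~\ref{Prop:MO}.
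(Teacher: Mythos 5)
Your overall strategy---freeze the coefficients, invoke the small-BMO Calder\'on--Zygmund theory for the interior $W^{1,p}$ estimate on $u-h$, and control the lower-order term by duality against an auxiliary constant-coefficient problem---is the right one, and your first two steps coincide with the paper's. The genuine gap is in the duality step. After pairing $w=u-h$ against a test density and transferring derivatives, you must estimate $\int_{B_{2R}} (\bar A - A)_{ij}^{\alpha\beta}\partial_\beta u^j\,\partial_\alpha\phi^i\,dx$. Since the target carries $\|A-\bar A\|_{L^p}$ and $1/p+1/p'=1$, H\"older forces whatever multiplies $A-\bar A$ to be integrated at exponent $p'>2>p$, which is not among your hypotheses. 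Your splitting $\nabla u=\nabla h+\nabla w$ does not repair this: (i) the pairing integral lives on all of $B_{2R}$, and $\|\nabla h\|_{L^\infty(B_{2R})}$ is \emph{not} controlled---the interior estimate for the constant-coefficient system degenerates at $\partial B_{2R}$, where $h$ is no better than its $W^{1,p}$ trace; and (ii) the piece $\int(\bar A-A)\nabla w\cdot\nabla\phi$ reintroduces $\|\nabla w\|_{L^p(B_{2R})}$ on the right, which cannot be absorbed into the left-hand side $\|\nabla w\|_{L^p(B_{3R/2})}$, while a global bound on $\|\nabla w\|_{L^p(B_{2R})}$ would again require $\nabla h\in L^\infty$ up to $\partial B_{2R}$. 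Replacing the paper's bounded auxiliary density by a general $g\in L^{p'}$ also costs you the bound $\|\nabla\phi\|_{L^\infty}\le CR$, which is exactly what the argument needs to pair $\|A-\bar A\|_{L^p}$ against an $L^{p'}$ norm of the gradient.

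The missing idea---and the reason the lemma assumes the equation for $u$ on the larger ball $B_{3R}$ and states its conclusion with norms over $B_{3R}$---is to apply the small-BMO $W^{1,p'}$ interior estimate to $u$ \emph{itself} before anything else:
\begin{equation*}
\|\nabla u\|_{L^{p'}(B_{2R})} \leq C\Big[\|f\|_{L^{p'}(B_{3R})} + R^{n(1/p'-1/p)}\|\nabla u\|_{L^p(B_{3R})}\Big].
\end{equation*}
With this higher integrability in hand, one keeps the density $(u-h)^j/\sqrt{|u-h|^2+t^2}$ (bounded by $1$, hence $\|\nabla\phi\|_{L^\infty}\le CR$) in the auxiliary problem, leaves $\nabla u$ intact in the pairing, and estimates $\int(\bar A-A)\nabla u\cdot\nabla\phi \le CR\,\|A-\bar A\|_{L^p(B_{2R})}\|\nabla u\|_{L^{p'}(B_{2R})}$, obtaining an $L^1$ bound on $u-h$ that feeds the lower-order term of the interior estimate exactly as in Lemma \ref{Lem:HRep}. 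Note that this is a \emph{first} and essential use of the smallness of $[A]_{BMO}$, applied to $u$ rather than to $w$; your proposal, which only ever works on $B_{2R}$ and $B_{7R/4}$, never produces any $L^{p'}$ control of $\nabla u$ and therefore cannot close.
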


\begin{proof} We amend the proof of Lemma \ref{Lem:HRep} using $L^p$ theories for elliptic systems whose leading coefficients have small $BMO$ semi-norm.\footnote{When $p$ is close to $2$ such smallness assumption is not needed, see e.g. \cite{CaffarelliPeral98-CPAM, Stroffolini01-PA}.} In the proof, $C$ denotes a generic positive constant which depends only on $n$, $N$, $p$, $\Lambda$ and $\lambda$. 

It is known that (see e.g. Dong and Kim \cite{DongKim09-MAA, DongKim10-ARMA})\footnote{For further references, see \cite{Byun05-TrAMS, ByunWang04-CPAM, Fazio96, Krylov07-CPDE, Stroffolini01-PA}.}, provided $[A]_{BMO(B_{4R})} \leq \gamma$ for some small enough $\gamma$ depending only on $n, N, p, \Lambda$ and $\lambda$, one has
\begin{equation}
\|\nabla u\|_{L^{p'}(B_{2R})} \leq C\Big[\|f\|_{L^{p'}(B_{3R})} + R^{n(1/p' - 1/p)}\|\nabla u\|_{L^p(B_{3R})}\Big].
	\label{Eq:19IV22-E0p}
\end{equation}

Using that $\bar A$ is constant, we have by standard elliptic estimates that
\[
\|\nabla h\|_{L^\infty(B_{7R/4})} \leq CR^{-n/{p}} \|\nabla h\|_{L^{p}(B_{2R})} \leq CR^{-n/{p}}\|\nabla u\|_{L^{p}(B_{2R})}.
\]
Using
\[
\partial_\alpha(A{}_{ij}^{\alpha\beta} \partial_\beta (u - h)^j) 
	= \partial_\alpha (f_i^\alpha + (\bar A - A)_{ij}^{\alpha\beta} \partial_\beta h^j) \quad \text{ in } B_{2R}, \quad i = 1, \ldots, N,
\]
and once again the fact that $[A]_{BMO(B_{4R})} \leq \gamma$, we have
\begin{align}
\|\nabla (u - h)\|_{L^{p}(B_{3R/2})} 
	&\leq C\Big[\|f\|_{L^{p}(B_{7R/4})} + \|A - \bar A\|_{L^{p}(B_{7R/4})}\|\nabla h\|_{L^\infty(B_{7R/4})}\nonumber\\
		&\qquad  + R^{-(n+p')/{p'}}\|u - h\|_{L^1(B_{7R/4})}\Big]\nonumber\\
	&\leq C\Big[R^{n(1/p - 1/p')}\|f\|_{L^{p'}(B_{2R})} + R^{-n/{p}} \|A - \bar A\|_{L^{p}(B_{2R})}\|\nabla u\|_{L^{p}(B_{2R})} \nonumber\\
		&\qquad + R^{-(n+{p'})/{p'}} \|u - h\|_{L^1(B_{2R})}\Big].
		\label{Eq:19IV22-E1p}
\end{align}

To estimate $\|u - h\|_{L^1(B_{2R})}$, recall from the proof of Lemma \ref{Lem:HRep} the chain of identities
\begin{align*}
\int_{B_{2R}} \frac{|u-h|^2}{\sqrt{|u-h|^2 + t^2}}\,dx 
	&= \int_{B_{2R}} \bar A_{ij}^{\alpha\beta} \partial_\alpha \phi^i \partial_\beta(u - h)^j\,dx\\
	&= \int_{B_{2R}} (f_i^\alpha + (\bar A - A)_{ij}^{\alpha\beta} \partial_\beta u^j)\partial_\alpha \phi^i \,dx,
\end{align*}
which imply
\[
\int_{B_{2R}} \frac{|u-h|^2}{\sqrt{|u-h|^2 + t^2}}\,dx 
	\leq C\Big[R^{(n+p)/{p}} \|f\|_{L^{p'}(B_{2R})} + R \|A - \bar A\|_{L^p(B_{2R})}\|\nabla u\|_{L^{p'}(B_{2R})} \Big].
\]
Noting that the constant $C$ is independent of $t$, we may send $t \rightarrow 0$ to obtain
\begin{equation}
\|u-h\|_{L^1(B_{2R})}
	\leq CR^{(n+p)/p} \Big[\|f\|_{L^{p'}(B_{2R})} + R^{-n/p}\|A - \bar A\|_{L^p(B_{2R})}\|\nabla u\|_{L^{p'}(B_{2R})} \Big].
	\label{Eq:19IV22-E3p}
\end{equation}
The conclusion follows from \eqref{Eq:19IV22-E0p}, \eqref{Eq:19IV22-E1p} and \eqref{Eq:19IV22-E3p}.
\end{proof}

\newcommand{\noopsort}[1]{}

\end{document}